\numberwithin{equation}{section}
\theoremstyle{plain}
\newtheorem{prop}{Proposition}[section]
\newtheorem{cor}{Corollary}[section]
\newtheorem{lem}{Lemma}[section]
\theoremstyle{remark}
\begin{document}

\begin{frontmatter}

\title{PAC-Bayesian bounds for Principal Component Analysis in Hilbert 
spaces}
\runtitle{PAC-Bayesian bounds for Principal Component Analysis in Hilbert 
spaces}

\begin{aug}
\author{\fnms{Ilaria} \snm{Giulini}\thanksref{t1}\ead[label=e1]{ilaria.giulini@me.com}}

\address{{\sc INRIA} Saclay\\
\printead{e1}}

\thankstext{t1}{The results presented in this paper 
were obtained while the author was preparing her PhD under the 
supervision of Olivier Catoni at the D\'epartement de Math\'ematiques et Applications, \'Ecole Normale Sup\'erieure, Paris, with the financial support of the 
R\'egion \^Ile de France.}

\runauthor{I. Giulini}

\affiliation{D\'epartement de Math\'ematiques et Applications, \'Ecole Normale Sup\'erieure, Paris, France}

\end{aug}

\begin{abstract}
Based on some new robust estimators of the covariance matrix, 
we propose stable versions of Principal Component Analysis (PCA)
and we qualify it 
independently of the dimension of the ambient space.
We first provide a robust estimator of the orthogonal projector on the largest eigenvectors of the covariance matrix. 
The behavior of such an estimator is related to the size of the gap in the spectrum of the covariance matrix and in particular
a large gap is needed in order to get a good approximation. 
To avoid the assumption of a large eigengap in the spectrum of the covariance matrix we propose a robust 
version of PCA that consists in performing a smooth cut-off of the spectrum via a Lipschitz function. 
We provide bounds on the approximation error in terms of the operator norm and of the Frobenius norm. 
\end{abstract}

\begin{keyword}[class=MSC]
\kwd
{62G35}
\kwd{62G05}
\kwd{62H25}
\end{keyword}

\begin{keyword}
\kwd{PAC-Bayesian learning}
\kwd{Principal Component Analysis}
\kwd{robust estimation}
\kwd{spectral projectors}
\kwd{dimension-free bounds}
\end{keyword}
\tableofcontents
\end{frontmatter}

\section{Introduction}

Principal Component Analysis (PCA) is a classical tool for dimensionality reduction. 
The basic idea of PCA is to reduce the dimensionality of a dataset by projecting it into the space spanned by the directions of maximal variance, 
that are called its principal components.
Since this set of directions lies in the space generated by the eigenvectors associated with the largest eigenvalues of the covariance matrix of the sample,
the dimensionality reduction is achieved by projecting the dataset into the space spanned by these eigenvectors, which in the following we call {\it largest eigenvectors}.

\vskip 2mm
\noindent
Given $X\in \mathbb R^d$ a random vector distributed according to an unknown probability distribution $\mathrm P \in \mathcal M_+^1(\mathbb R^d),$ the goal is to estimate the eigenvalues and eigenvectors of the covariance matrix of $X$
\[
\Sigma = \mathbb E \bigl[ \bigl(X -\mathbb E(X)\bigr)\bigl(X -\mathbb E(X)\bigr)^{\top}  \bigr]
\]
from an i.i.d. sample $X_1, \dots, X_n \in \mathbb R^d$ drawn according to $\mathrm P.$ 
Observe that in the case where the random vector $X$ is centered (i.e. $\mathbb E[X]=0$) the covariance matrix $\Sigma$ is the Gram matrix
\[
G =\mathbb E  \bigl( X X^{\top}  \bigr) .
\]
Many results concerning the Gram matrix estimate can be found in the literature, e.g. ~\cite{Rud}, ~\cite{RVer}, ~\cite{JTr}.
These results follow from the study of random matrix theory and use as an estimator of $G$ 
the matrix obtained by 
replacing the unknown probability distribution $\mathrm P$ with the sample distribution $\frac{1}{n} \sum_{i=1}^n \delta_{X_i}.$
In the following we call such an estimator {\it empirical Gram matrix}.\\[1mm]
However, since the empirical Gram matrix, and consequently classical PCA, 
is sensitive to a heavy tailed sample distribution, 
several methods have been proposed to provide a stabler version of 
PCA, e.g. ~\cite{CLMW}, ~\cite{SMin}. 
In ~\cite{CLMW} the authors show that principal components of a data matrix can be recovered 
when part of the observations are contained in a low-dimensional space and the rest are arbitrarily corrupted.
An alternative approach is proposed in ~\cite{SMin} where, without assuming any geometrical assumption on the data, 
Minsker proposes a robust estimator of the Gram matrix, based on the geometric median. 
Such an estimator is used to provide non-asymptotic dimension-independent results concerning PCA.
\\[1mm]
We use the robust estimator $\hat G$ proposed in ~\cite{Giulini15} to describe a new approach that qualifies the stability of PCA
independently of the dimension of the ambient space. 
Taking advantage of the fact that they are independent of the dimension, the results can be extended to the 
infinite-dimensional setting of separable Hilbert spaces and thus to kernel-PCA. \\[1mm]
Results on PCA in Hilbert spaces can be found in Koltchinskii and Lounici  ~\cite{KoltLou2}, ~\cite{KoltLou3}.
The authors study the problem of estimating the spectral projectors of the covariance operator
by their empirical counterpart in the case of Gaussian centered random vectors, based on the bounds obtained in ~\cite{KoltLou1}, 
and in the setting where both the sample size $n$ and the trace of the covariance operator are large.

\vskip2mm
\noindent
The paper is organized as follows. In section~\ref{prelim} we discuss some preliminary results presented in ~\cite{Giulini15} 
on a robust estimator $\widehat G$ of the Gram matrix. 
In section~\ref{eigensec} we prove that each eigenvalue $\widehat \lambda_i$ of 
$\widehat G$ is a robust estimator of the corresponding eigenvalue of the Gram matrix. 
As a consequence, 
the orthogonal projector on the largest eigenvectors of $G$ can be estimated by the projector on the largest eigenvectors of $\widehat G$, providing a first version of robust PCA,
as shown in section~\ref{spca}.
The behavior of this estimator is related to the size of the gap in the spectrum of the Gram matrix and 
more precisely it is necessary to have a large eigengap in order to get a good approximation
(Proposition~\ref{p51pca}).
To avoid the assumption of a large gap in the spectrum of $G$ 
we propose in section~\ref{rpca} another version of robust PCA 
which consists in performing a smooth cut-off of the spectrum of the Gram matrix via a Lipschitz function. 
We provide bounds on the approximation error, 
in terms of the operator norm (Proposition~\ref{supnorm}) and of the Frobenius norm (Proposition~\ref{Fnorm}), 
that replace the size of the eigengap by the inverse of the Lipschitz constant
of the cut-off function.

\section{Preliminaries}\label{prelim}
In this section we discuss some results presented in ~\cite{Giulini15} concerning the construction of a robust estimator of the Gram matrix. 
The idea is to use some PAC-Bayesian inequalities, linked to Gaussian perturbations, to first construct a confidence region for the quadratic form 
$\theta^{\top} G \theta$ and then to define a robust estimator for such a quantity. 
From a theoretical point of view we can consider any quadratic form belonging to the confidence interval obtained for $\theta^{\top} G \theta$. 
However from an algorithmic point of view, these constraints are imposed only for a finite number of directions.
More precisely, we consider a symmetric matrix $Q$ that satisfies the constraints 
for any $\theta$ in a finite $\delta$-net of the unit sphere $\mathbb S_d = \{ \theta \in \mathbb R^d, \, \| \theta\|=1\}$.
The construction of such an estimator is based on the computation of a convex optimization algorithm. 
\\[1mm]
We first introduce some notation. 
Let, as in the introduction, $X\in \mathbb R^d$ be a random vector of law $\mathrm P\in \mathcal M_+^1(\mathbb R^d)$. 
Let $a>0$ and let 
\[ 
 K  = 1 + \left\lceil a^{-1} \log \biggl( \frac{n}{72(2+c) \kappa^{1/2}} \biggr) \right\rceil
\]
where $c= \frac{15}{8 \log(2)(\sqrt{2}-1)} \exp \left( \frac{1 + 2 \sqrt{2}}{2} \right)$ and
\[
\kappa= \sup_{\substack{
\theta \in \mathbb{R}^d \\ 
\mathbb E ( \langle \theta, X \rangle^2 ) > 0
}} \frac{\mathbb E \bigl( \langle \theta , X \rangle^4 \bigr)}{
\mathbb E \bigl( \langle \theta, X \rangle^2 \bigr)^2}.
\] 
Let $s_4^2 = \mathbb{E} \bigl( \lVert X \rVert^4 \bigr)^{1/2}$ and 
let $\sigma \in ]0, s_4^2]$ be a threshold.
We put
\[
B_*(t) = \begin{cases} 
\displaystyle \frac{n^{-1/2} \zeta(\max \{ t, \sigma \} )}{1 - 4 \, n^{-1/2} \zeta( \max \{ t, \sigma \} )} &  \bigl[ 6 + (\kappa-1)^{-1} \bigr] \zeta( \max \{ t, \sigma \} ) \leq \sqrt{n} \\ 
+ \infty & \text{ otherwise}
\end{cases} 
\]
where 
\begin{multline}\label{zpca}
\zeta (t) = \sqrt{ 2 (\kappa-1) \Biggl( \frac{(2 + 3c) \; s_4^2}{4(2+c) 
\kappa^{1/2} t} 
+ \log(K / \epsilon) \Biggr)} \cosh(a/4)  \\
+ \sqrt{ \frac{2(2+c)\kappa^{1/2} \; 
s_4^2}{ t}} \cosh ( a/ 2).
\end{multline}

\vskip2mm
\noindent
The following proposition holds true.

\begin{prop}
\label{prop1.25} {\bf (~\cite{Giulini15})}
Let us assume that $8 \zeta(\sigma) \leq \sqrt{n}$, $\sigma \leq s_4^2$ 
and that $\kappa 
\geq 3/2$. 
With probability at least $1 - 2 \epsilon$, for any $\theta \in \mathbb{S}_d$, 
\begin{align*}
\Bigl\lvert \max \{ \theta^{\top} Q \theta, \sigma \}  - 
\max \{ \theta^{\top} G \theta, \sigma \}  \Bigr\rvert 
& \leq 2 \max \bigl\{ \theta^{\top} G \theta, \sigma \bigr\} B_* \bigl( 
\theta^{\top} G \theta\bigr) + 5 \delta\lVert G \rVert_{F}, \\
\Bigl\lvert \max \{ \theta^{\top} Q \theta, \sigma \} - 
\max \{ \theta^{\top} G \theta, \sigma \}  \Bigr\rvert 
& \leq 2 \max \bigl\{ \theta^{\top} Q \theta, \sigma \bigr\} B_* \bigl( \min \{ 
\theta^{\top} Q \theta, s_4^2 \bigr\} \bigr) \\
& \hskip 49mm+ 5 \delta\lVert G \rVert_{F}.
\end{align*}
\end{prop}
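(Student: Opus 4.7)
The starting point is the PAC-Bayesian concentration inequality from \cite{Giulini15}, which with probability at least $1-2\epsilon$ gives a uniform two-sided control of $\theta^\top G \theta$ by some empirical PAC-Bayesian proxy $\Psi(\theta)$, with relative deviation of order $B_*(\theta^\top G \theta)$. Two applications of this inequality (one for the upper tail and one for the lower tail, each at level $\epsilon$) explain the factor $2\epsilon$ in the stated probability. The matrix $Q$ is then defined as any symmetric matrix for which $\theta^\top Q \theta$ lies in the confidence interval produced by $\Psi$, simultaneously for every $\theta$ in a finite $\delta$-net $\mathcal N_\delta$ of $\mathbb S_d$.

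The plan is: first establish the first inequality on $\mathcal N_\delta$, then extend it to the whole sphere via a discretization argument that produces the term $5\delta \lVert G \rVert_F$, and finally deduce the second inequality from the first by an inversion argument. Concretely, for $\theta \in \mathcal N_\delta$ the constraints defining $Q$ together with the PAC-Bayesian bound give
\[
\bigl\lvert \max\{\theta^\top Q \theta, \sigma\} - \max\{\theta^\top G \theta, \sigma\}\bigr\rvert \leq 2\max\{\theta^\top G \theta, \sigma\} B_*(\theta^\top G \theta).
\]
For an arbitrary $\theta \in \mathbb S_d$ I would pick $\theta' \in \mathcal N_\delta$ with $\lVert \theta-\theta'\rVert \leq \delta$ and use the identity $\theta^\top A \theta - \theta'^\top A \theta' = (\theta-\theta')^\top A (\theta+\theta')$ for symmetric $A$ applied to $A=G$ and $A=Q$, each of which contributes a discretization error of order $2\delta \lVert A \rVert_{\mathrm{op}} \leq 2\delta \lVert A \rVert_F$. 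The coefficient $5$ is obtained by noting that $1$-Lipschitzness of $t\mapsto \max\{t,\sigma\}$ allows one to pass the max through the discretization error, while $\lVert Q \rVert_F$ is itself controlled by $\lVert G \rVert_F$ up to a lower-order term through the net bound, absorbing everything into a single $5\delta \lVert G \rVert_F$ remainder.

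To obtain the second inequality I would invert the first one, replacing the reference point $\theta^\top G\theta$ (which is unknown) by the observed quantity $\theta^\top Q\theta$. The key observation is that $\zeta$, and hence $B_*$, is monotonically decreasing on $[\sigma, +\infty[$, so that if $\theta^\top G \theta \geq \min\{\theta^\top Q \theta, s_4^2\}$ one directly has $B_*(\theta^\top G\theta) \leq B_*(\min\{\theta^\top Q\theta, s_4^2\})$ and bounding $\max\{\theta^\top G\theta,\sigma\}$ in the leading factor by $\max\{\theta^\top Q\theta,\sigma\}$ using the first inequality itself (which is permitted since the correction is already of order $B_*$) yields the stated form. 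In the complementary case $\theta^\top G\theta < \min\{\theta^\top Q\theta, s_4^2\}$, the first inequality directly forces $\max\{\theta^\top G\theta,\sigma\}$ and $\max\{\theta^\top Q\theta,\sigma\}$ to be close, and a short computation shows that the bound in terms of $\theta^\top Q \theta$ holds with room to spare.

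The main obstacle is the inversion step: one must carefully handle the implicit character of the bound (since $B_*$ depends on the unknown $\theta^\top G \theta$) and the truncations $\max\{\cdot,\sigma\}$ and $\min\{\cdot, s_4^2\}$ without picking up additional factors. The control of the discretization in Frobenius norm, although conceptually straightforward, also requires a careful bookkeeping of the various $2\delta \lVert \cdot \rVert_F$ contributions in order to arrive at the precise constant $5$ stated in the proposition.
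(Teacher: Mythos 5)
The paper itself contains no proof of Proposition \ref{prop1.25}: it is imported verbatim from \cite{Giulini15} and stated as a preliminary, accompanied only by the informal description of the construction in Section \ref{prelim} (a PAC-Bayesian confidence region for $\theta^{\top} G \theta$, imposed as constraints on a symmetric matrix $Q$ over a finite $\delta$-net of $\mathbb{S}_d$). Your outline reproduces exactly that roadmap, so at the level of strategy it is consistent with what the paper says the cited reference does; but as a proof it has genuine gaps rather than being a complete argument. First, all of the quantitative content of the statement lives in the PAC-Bayesian inequality that you invoke as a black box: the specific form of $\zeta$ in \eqref{zpca} (the $\cosh(a/4)$ and $\cosh(a/2)$ factors, the kurtosis $\kappa$, the number $K$ of scales) and the shape of $B_*$ are precisely what would have to be established, and nothing in your sketch produces them. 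Second, the constant $5$ in $5\delta\lVert G \rVert_F$ is asserted rather than derived; your own accounting gives $2\delta\lVert G\rVert_{F}+2\delta\lVert Q\rVert_{F}$ plus a Lipschitz pass-through, and converting $\lVert Q\rVert_F$ into $\lVert G\rVert_F$ requires the very comparison you are in the middle of proving, so the bookkeeping as written is circular.

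The most substantive gap is the inversion step, which you correctly identify as the main obstacle but do not carry out, and that is exactly where the hypotheses of the statement are needed. The assumptions $8\zeta(\sigma)\leq\sqrt{n}$ and $\kappa\geq 3/2$ are what guarantee $n^{-1/2}\zeta(\max\{t,\sigma\})\leq 1/8$ and hence $B_*\leq 1/4$, so that the factor $1-2B_*$ stays bounded away from zero and the implicit bound $\lvert q-g\rvert\leq 2\,\max\{g,\sigma\}B_*(g)$ (with $g=\theta^{\top}G\theta$, $q=\theta^{\top}Q\theta$) can be inverted at the stated constants; moreover the inversion relies on the monotonicity of $t\mapsto \max\{t,\sigma\}\bigl(1\pm 2B_*(\min\{t,s_4^2\})\bigr)$ and the fact that $B_*$ is non-increasing, properties the paper itself uses downstream (proof of Proposition \ref{prop1.23eig}) but which you neither state nor verify. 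Your case split on whether $\theta^{\top}G\theta\geq\min\{\theta^{\top}Q\theta,s_4^2\}$ does not resolve the regime where $\theta^{\top}G\theta$ is small, which is precisely what the $\sigma$-truncation is there to handle; without these ingredients the second inequality does not follow from the first ``with room to spare,'' and in any case the constants cannot be checked. In short: the outline is the right shape, but the proof content is deferred entirely to \cite{Giulini15}, which is also what the paper does explicitly by citation.
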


\vskip2mm
\noindent
We recall that, given $M \in M_d(\mathbb R)$ a symmetric $d\times d$ matrix, the Frobenius norm
of $M$ is defined as
\[
\|M\|_F^2 = \mathbf{Tr}(M^{\top}M)
\]
and that $\| M\|_{\infty} \leq \| M\|_F.$

\vskip2mm
\noindent
Observe that we assume that the threshold $\sigma$ is such that 
$8 \zeta(\sigma) \leq \sqrt n$ in order to have a meaningful bound. 
Indeed, in this case $B_*(t) <+\infty$, provided that $\kappa \geq 3/2$.
\\[1mm]
However, since we do not know whether $Q$ is non-negative, we can decompose it in its positive and negative parts so that 
$Q=Q_+-Q_-$ and consider as an estimator $\widehat G = Q_+$. 
We deduce the following result. 
\begin{prop}
\label{prop1.22q} 
{\bf (~\cite{Giulini15})}
Let us assume that $8 \zeta(\sigma) \leq \sqrt{n}$, 
$\sigma \leq s_4^2$ and that 
$\kappa \geq 3/2$. With probability at least $1 - 2 \epsilon$, 
for any $\theta \in \mathbb{S}_d$, 
\begin{align*}
\Bigl\lvert \max \{ \theta^{\top} \widehat G \theta, \sigma \}  - 
\max \{ \theta^{\top} G \theta, \sigma \}  \Bigr\rvert 
& \leq 2 \max \bigl\{ \theta^{\top} G \theta, \sigma \bigr\} B_* \bigl( 
\theta^{\top} G \theta\bigr) + 7 \delta\lVert G \rVert_{F}, \\
\Bigl\lvert \max \{ \theta^{\top} \widehat G \theta, \sigma \} - 
\max \{ \theta^{\top} G \theta, \sigma \}  \Bigr\rvert 
& \leq 2 \max \bigl\{ \theta^{\top}\widehat G \theta, \sigma \bigr\} B_* \bigl( \min \{ 
\theta^{\top} \widehat G \theta, s_4^2 \bigr\} \bigr) \\
& \hskip 49mm+ 7 \delta \lVert G \rVert_{F}.
\end{align*}
\end{prop}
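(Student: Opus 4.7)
The plan is to derive Proposition~\ref{prop1.22q} from Proposition~\ref{prop1.25} by inserting $\max\{\theta^{\top}Q\theta,\sigma\}$ between $\max\{\theta^{\top}\widehat G\theta,\sigma\}$ and $\max\{\theta^{\top}G\theta,\sigma\}$ via the triangle inequality. The second piece of the split, $\bigl\lvert\max\{\theta^{\top}Q\theta,\sigma\} - \max\{\theta^{\top}G\theta,\sigma\}\bigr\rvert$, is bounded directly by Proposition~\ref{prop1.25} and supplies the $2\max\{\theta^{\top}G\theta,\sigma\}B_*(\theta^{\top}G\theta) + 5\delta\lVert G\rVert_F$ part of the statement. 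For the first piece I use that $t\mapsto \max(t,\sigma)$ is $1$-Lipschitz together with $\theta^{\top}(Q_+-Q)\theta = \theta^{\top}Q_-\theta \geq 0$, which yields the bound $\bigl\lvert\max\{\theta^{\top}Q_+\theta,\sigma\} - \max\{\theta^{\top}Q\theta,\sigma\}\bigr\rvert \leq \theta^{\top}Q_-\theta \leq \lVert Q_-\rVert_\infty$. Consequently, the entire proof reduces to establishing $\lVert Q_-\rVert_\infty \leq 2\delta\lVert G\rVert_F$ on the high-probability event underlying Proposition~\ref{prop1.25}.

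The main obstacle is precisely this operator-norm bound on $Q_-$, which cannot be extracted from Proposition~\ref{prop1.25} by itself. That proposition only controls the truncated quantity $\max\{\theta^{\top}Q\theta,\sigma\}$ and therefore places no a priori lower bound on $\theta^{\top}Q\theta$ in directions where $\theta^{\top}G\theta$ is small, so in principle $Q$ could have arbitrarily large negative eigenvalues without violating it. To bound $\lVert Q_-\rVert_\infty$ I would return to the explicit construction of $Q$ in \cite{Giulini15}, where $Q$ is obtained by a convex optimization that pins $\theta^{\top}Q\theta$ itself (and not merely its truncation) inside a two-sided PAC-Bayesian confidence interval around $\theta^{\top}G\theta$, for every $\theta$ in a $\delta$-net of $\mathbb{S}_d$. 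Combined with the positivity $\theta^{\top}G\theta \geq 0$, the lower end of this confidence interval is itself at least $-O(\delta\lVert G\rVert_F)$ on the net, and the same Lipschitz/covering extension that generates the $5\delta\lVert G\rVert_F$ slack of Proposition~\ref{prop1.25} transports the lower bound from the net to every unit vector, delivering $\lVert Q_-\rVert_\infty \leq 2\delta\lVert G\rVert_F$.

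Once this operator-norm bound is available, the first inequality of Proposition~\ref{prop1.22q} follows immediately by summing the two contributions $5\delta\lVert G\rVert_F + 2\delta\lVert G\rVert_F = 7\delta\lVert G\rVert_F$. For the second inequality, whose right-hand side is stated in terms of $\widehat G$ rather than $G$, the same triangle-inequality reduction combined with the second inequality of Proposition~\ref{prop1.25} yields a bound in terms of $\max\{\theta^{\top}Q\theta,\sigma\}B_*(\min\{\theta^{\top}Q\theta,s_4^2\})$; an elementary monotonicity argument, exploiting $\theta^{\top}Q\theta \leq \theta^{\top}\widehat G\theta$ together with the explicit structure of $B_*$ (which grows through the $\max$ factor while its own decrease on the relevant range is dominated by that growth), upgrades this into the analogous expression in $\widehat G$ and preserves the $7\delta\lVert G\rVert_F$ budget.
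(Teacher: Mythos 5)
Your reduction has the right shape: splitting off $\bigl\lvert \max\{\theta^{\top} Q\theta,\sigma\}-\max\{\theta^{\top} G\theta,\sigma\}\bigr\rvert$ (handled by Proposition~\ref{prop1.25}) and controlling the remaining term by $\theta^{\top}Q_-\theta\leq\lVert Q_-\rVert_{\infty}$ is sound, and the monotonicity of $t\mapsto\max\{t,\sigma\}B_*(\min\{t,s_4^2\})$ together with $\theta^{\top}Q\theta\leq\theta^{\top}\widehat G\theta$ does let you pass from $Q$ to $\widehat G=Q_+$ in the second inequality at no cost. But the argument stands or falls on the claim $\lVert Q_-\rVert_{\infty}\leq 2\delta\lVert G\rVert_F$, and this is exactly where there is a genuine gap. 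You correctly observe that Proposition~\ref{prop1.25} cannot yield it, because the truncation $\max\{\cdot,\sigma\}$ erases all information about how negative $\theta^{\top}Q\theta$ can be in directions where $\theta^{\top}G\theta$ is small; but your substitute is only an appeal to unstated features of the construction in \cite{Giulini15} (that the optimization constrains the untruncated form $\theta^{\top}Q\theta$ two-sidedly on the net, that the lower confidence limit is at least $-O(\delta\lVert G\rVert_F)$, and that the net-to-sphere extension preserves this with the stated constant). None of these facts is available in the present paper, which indeed states Proposition~\ref{prop1.22q} as imported from \cite{Giulini15} without proof, so they would have to be quoted and verified explicitly rather than asserted.

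Note also that the net-extension step you invoke is itself delicate: transporting a lower bound from net points to an eigenvector of the most negative eigenvalue requires an a priori bound on $\lVert Q\rVert_{\infty}$ (or $\lVert Q\rVert_F$), which again must come from the construction and enters the constant; since the most negative eigenvalue is part of $\lVert Q\rVert_{\infty}$, one has to check that this does not become circular and that the resulting slack really is $2\delta\lVert G\rVert_F$ so as to match the stated $7\delta\lVert G\rVert_F=5\delta\lVert G\rVert_F+2\delta\lVert G\rVert_F$ budget. As written, the central inequality is a conjecture about \cite{Giulini15} rather than a proof, so the proposal does not establish the proposition.
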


\section{Estimate of the eigenvalues} \label{eigensec}

Denote by $\lambda_1\geq \dots \geq \lambda_d$ the eigenvalues of $G$ and 
by $p_1, \dots, p_d$ a corresponding orthonormal basis eigenvectors, 
so that $\lambda_i= p_i^{\top}Gp_i.$ \\[1mm]
Similarly, let $\widehat \lambda_1\geq \dots \geq \widehat \lambda_d$ be the 
eigenvalues of $\widehat G$ and $q_1, \dots, q_d$ a corresponding orthonormal basis of eigenvectors.

In this section we prove that each eigenvalue of $\widehat G$ 
is a robust estimator of the corresponding eigenvalue of the Gram matrix.

\begin{prop}
\label{prop1.23eig} 
Let us assume that $8 \zeta(\sigma) \leq \sqrt{n}$, $\sigma \leq 
s_4^2$ and that $\kappa \geq 3/2$. 
With probability at least $1 - 2 \epsilon$, 
for any $i = 1, \dots, d$, the two following inequalities 
hold together
\begin{align*}
\bigl\lvert \max\{ \lambda_i , \sigma\}- \max\{ \widehat{\lambda}_i , \sigma\}\bigr\rvert & \leq 
2 \max\{ \lambda_i , \sigma\}B_*(\lambda_i) + 5 \delta \lVert G \rVert_{F}, \\ 
\bigl\lvert\max\{ \lambda_i, \sigma\}-\max\{  \widehat{\lambda}_i, \sigma\} \bigr\rvert & \leq 
2 \max\{ \widehat{\lambda}_i , \sigma\} B_* \bigl( \min \bigl\{ 
\widehat{\lambda}_i, s_4^2 \bigr\} \bigr) + 5 \delta \lVert G \rVert_{F}. 
\end{align*}
Consequently, 
\begin{align*}
|\lambda_i - \widehat \lambda_i | & \leq 2 \max\{ \lambda_i , \sigma\}
B_* \bigl( \lambda_i \bigr) + 5 \delta \lVert G \rVert_{F} + \sigma,\\
|\lambda_i - \widehat \lambda_i  | & \leq 2 \max\{\widehat \lambda_i , \sigma\}  
B_* \bigl( \min \bigl\{ \widehat \lambda_i, s_4^2 \bigr\} \bigr)  + 5 \delta 
\lVert G \rVert_{F} + \sigma.
\end{align*}
\end{prop}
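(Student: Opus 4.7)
The strategy is to lift the pointwise confidence bounds of Proposition~\ref{prop1.25} to statements about the $i$-th eigenvalues via the Courant--Fischer min-max characterization. Writing $Q = Q_+ - Q_-$, the estimator $\widehat G = Q_+$ has eigenvalues $\widehat\lambda_i = \max\{\mu_i, 0\}$, where $\mu_1 \geq \dots \geq \mu_d$ denote the eigenvalues of $Q$. Since the threshold $\sigma$ is non-negative, $\max\{\widehat\lambda_i,\sigma\} = \max\{\mu_i,\sigma\}$ for every $i$, so it suffices to prove both inequalities with $\mu_i$ in place of $\widehat\lambda_i$.

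Setting $f_\sigma(t) := \max\{t,\sigma\}$, continuity and monotonicity of $f_\sigma$ yield the Courant--Fischer characterization
\[
f_\sigma(\mu_i) = \max_{\dim V = i} \min_{\theta \in V \cap \mathbb S_d} f_\sigma(\theta^\top Q \theta) = \min_{\dim V = d-i+1} \max_{\theta \in V \cap \mathbb S_d} f_\sigma(\theta^\top Q \theta).
\]
For the first inequality, the plan is to test the max-min form against $V_i = \mathrm{span}(p_1,\dots,p_i)$ (on which $\theta^\top G \theta \geq \lambda_i$) to obtain a lower bound on $f_\sigma(\mu_i)$, and the min-max form against $V_i' = \mathrm{span}(p_i,\dots,p_d)$ (on which $\theta^\top G \theta \leq \lambda_i$) to obtain an upper bound; in each case the first inequality of Proposition~\ref{prop1.25} controls $f_\sigma(\theta^\top Q \theta)$ in terms of $f_\sigma(\theta^\top G \theta)$. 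After this substitution, the argument reduces to comparing the one-variable functions
\[
\Phi(t) = f_\sigma(t)\bigl(1 - 2 B_*(t)\bigr), \qquad \Psi(t) = f_\sigma(t)\bigl(1 + 2 B_*(t)\bigr)
\]
at a generic $t = \theta^\top G \theta$ with their values at $t = \lambda_i$, i.e.\ to showing $\Phi(t) \geq \Phi(\lambda_i)$ for $t \geq \lambda_i$ and $\Psi(t) \leq \Psi(\lambda_i)$ for $t \leq \lambda_i$.

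The main technical point is therefore the monotonicity of $\Phi$ and $\Psi$ on $\mathbb R$. Both are constant on $(-\infty,\sigma]$, since $B_*$ depends on its argument only through $\max\{\cdot,\sigma\}$. On $[\sigma,\infty)$, the assumption $8\zeta(\sigma) \leq \sqrt n$ gives $4 n^{-1/2}\zeta(t) \leq 1/2$, hence $1 \mp 2B_*(t) > 0$, and $\Phi$ is non-decreasing because $f_\sigma$ and $1 - 2B_*$ are both non-decreasing and positive. The monotonicity of $\Psi$ boils down to showing that $t \mapsto tB_*(t)$ is non-decreasing on $[\sigma,\infty)$, which I would verify by a short calculation: the explicit form~(\ref{zpca}) lets me write each summand of $t\zeta(t)$ as the square root of an affine or quadratic expression in $t$, so $t\zeta(t)$ is manifestly non-decreasing, and differentiating $tB_*(t) = n^{-1/2}\,t\zeta(t)/\bigl(1-4n^{-1/2}\zeta(t)\bigr)$ and invoking $4n^{-1/2}\zeta(t) \leq 1/2$ controls the sign of the derivative.

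The second inequality is obtained by the symmetric argument in which the roles of $G$ and $Q$ are exchanged: I would apply Courant--Fischer against the subspaces $\mathrm{span}(q_1,\dots,q_i)$ and $\mathrm{span}(q_i,\dots,q_d)$ built from the eigenvectors of $\widehat G$, and invoke the second inequality of Proposition~\ref{prop1.25}; the corresponding versions of $\Phi,\Psi$ then carry $B_*(\min\{t,s_4^2\})$ in place of $B_*(t)$, and since this is still non-increasing (and constant on $[s_4^2,\infty)$), the monotonicity verification carries over verbatim. Finally, the concluding bounds on $|\lambda_i - \widehat\lambda_i|$ follow from the elementary inequality $|a - \max\{a,\sigma\}| \leq \sigma$ for $a \geq 0$, applied to both $a = \lambda_i$ and $a = \widehat\lambda_i$ and combined with the triangle inequality.
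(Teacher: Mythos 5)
Your proposal is correct in substance and follows essentially the same route as the paper: both arguments lift the uniform bounds of Proposition~\ref{prop1.25} to the $i$-th eigenvalues by a variational comparison, invoke the monotonicity of $t \mapsto \max\{t,\sigma\}\bigl(1 \pm 2 B_*(\min\{t, s_4^2\})\bigr)$, and only then pass from $Q$ to $\widehat G = Q_+$ and pay the extra $\sigma$. The only structural difference is that you test the two standard Courant--Fischer forms on subspaces spanned by eigenvectors of one matrix at a time, whereas the paper works with the mixed subspaces $\mathbf{span}\{q_1,\dots,q_{i-1}\}^{\perp} \cap \mathbf{span}\{p_1,\dots,p_i\}$ and a dimension count; this is cosmetic. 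Your sketched verification that $t \mapsto t B_*(t)$ is non-decreasing is sound and is in fact a detail the paper merely asserts: the derivative of $t\,n^{-1/2}\zeta(t)/\bigl(1-4n^{-1/2}\zeta(t)\bigr)$ has numerator proportional to $\zeta(t) + t\zeta'(t) - 4 n^{-1/2}\zeta(t)^2$, and from the explicit form \eqref{zpca} one gets $\zeta + t\zeta' \geq \zeta/2 \geq 4 n^{-1/2}\zeta^2$ as soon as $\zeta(t) \leq \sqrt{n}/8$, which holds for $t \geq \sigma$ under the standing assumption. One small slip: the final step as you describe it (triangle inequality through $\max\{\lambda_i,\sigma\}$ and $\max\{\widehat\lambda_i,\sigma\}$, each replacement costing $\sigma$) only yields $+2\sigma$, whereas the statement claims $+\sigma$; to get the paper's constant, use instead $\lvert \lambda_i - \widehat\lambda_i \rvert \leq \bigl\lvert \max\{\lambda_i,\sigma\} - \max\{\widehat\lambda_i,\sigma\} \bigr\rvert + \bigl\lvert \min\{\lambda_i,\sigma\} - \min\{\widehat\lambda_i,\sigma\} \bigr\rvert$, which follows from the identity $a + \sigma = \max\{a,\sigma\} + \min\{a,\sigma\}$, and note that the second term is at most $\sigma$ since both minima lie in $[0,\sigma]$.
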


\begin{proof}
We observe that, for any $i \in \{ 1,\dots, d\},$ the vector space 
\[
\mathbf{span}\{q_1,\dots, q_{i-1} \}^{\perp} \cap \mathbf{span} 
\{p_1,\dots, p_i\} \subset \mathbb R^d
\]
is of dimension at least 1, so that the set
\[
V_i = \bigl\{ \theta \in \mathbb S_d \ | \ \theta \in 
\mathbf{span} \{q_1,\dots, q_{i-1} \}^{\perp} \cap 
\mathbf{span} \{p_1,\dots, p_i\} \bigr\} \subset \mathbb R^d
\]
is non-empty. 

Indeed, putting $A  = \mathbf{span} \{ q_1, \dots, q_{i-1} \}^{\perp}$ 
and $B = \mathbf{span} \{p_1, \dots, p_i\}$, 
we see that $\dim(A \cap B) = \dim(A) + \dim(B) - \dim(A+B) \geq 1$, 
since $\dim(A+B) \leq \dim(\mathbb{R}^d) = d$ and $\dim(A) + \dim(B) = d+1$. 
Hence, there exists $\theta_i \in V_i$ 
and for such a $\theta_i$, we have $\theta_i^{\top}G\theta_i\geq \lambda_i.$ It follows that
\begin{align*}
\max\{ \lambda_i , \sigma\} & \leq \sup \left\{ \max\{\theta^{\top}G\theta, \sigma\}   \ | \  \theta \in V_i \right\}  \\
& \leq \sup \Bigl\{ \max\{\theta^{\top}G\theta , \sigma\}  \ | \  \theta \in \mathbb{S}_d,\ \theta \in \mathbf{span} \{q_1, \dots, q_{i-1} \}^{\perp} \Bigr\} .
\end{align*}
Therefore, according to Proposition \ref{prop1.25}, 
\begin{multline*}
\max \{ \lambda_i, \sigma \} \bigl( 1 - 2 B_*(\lambda_i) \bigr) \\ 
\leq \sup \Bigl\{ \max \{ \theta^{\top} Q \theta, \sigma \} \; | \;  
\theta \in \mathbb{S}_d \cap \mathbf{span} \{ q_1, \dots, q_{i-1} \}^{\perp} \Bigr\}+ 5 \delta \lVert G \rVert_{F} \\ 
 \leq \max \{ \widehat{\lambda}_i , \sigma \} + 5 \delta \lVert G \rVert_{F}. 
\end{multline*}
In the same way,
\begin{align*}
\max \{ \widehat{\lambda}_i, \sigma \} & \leq \sup \Bigl\{ 
\max \{ \theta^{\top}G\theta, \sigma \}  \Bigl( 1 + 2 B_*\bigl( \theta^{\top}G\theta \bigr) \Bigr) \; 
\\ & \hspace{30ex} | \; \theta \in \mathbb{S}_d \cap \mathbf{span} \{ p_1, \dots, p_{i-1}\}^{\perp} \Bigr\} 
\\ & \hspace{3ex} + 5 \delta \lVert G \rVert_{F}\\ 
& \leq \max \{ \lambda_i, \sigma \} \bigl( 1  + 2 B_*(\lambda_i) \bigr) 
+ 5 \delta \lVert G \rVert_{F}, 
\end{align*}
\begin{multline*} 
\max \{ \widehat{\lambda}_i, \sigma \} \Bigl( 1 - 2 B_* \bigl( \min \{ 
\widehat{\lambda}_i, s_4^2 \} \bigr) \Bigr)\\
\leq \sup \Bigl\{ \max \{ \theta^{\top}G\theta , \sigma \} \; | \; \theta \in \mathbb{S}_d \cap 
\mathbf{span} \{ p_1, \dots, p_{i-1} \}^{\perp} \Bigr\} + 5 \delta \lVert G \rVert_{F} \\
\leq \max \{ \lambda_i, \sigma \} + 5 \delta \lVert G \rVert_{F},
\end{multline*}
and
\begin{align*}
\max \{ \lambda_i, \sigma \} &
\leq \sup \Bigl\{ \max \{ \theta^{\top} Q \theta, \sigma \} \Bigl( 1 + 2 B_*\bigl( \min \{ \theta^{\top} Q \theta, s_4^2 \} \bigr) \Bigr) \; | \\
& \hskip21mm | \; \theta \in \mathbb{S}_d \cap \mathbf{span} \{ q_1, \dots, q_{i-1} \}^{\perp} \Bigr\} 
+ 5 \delta \lVert G \rVert_{F} \\ 
& \leq \max \{ \widehat{\lambda}_i, \sigma \} \Bigl( 1 + 2 B_* \bigl( \min \{ \widehat{\lambda}_i, s_4^2 \} \bigr) \Bigr) + 5 \delta \lVert 
G \rVert_{F}.
\end{align*}  
In all these inequalities we have used the fact that 
\begin{align*}
t & \mapsto \max \{t, \sigma \}  \Bigl( 1 - 2 B_*(\min \{ t, s_4^2 \} ) \Bigr) \\
t & \mapsto \max \{t, \sigma \}  \Bigl( 1 + 2 B_*(\min \{ t, s_4^2 \} ) \Bigr)
\end{align*}
are non-decreasing and that $\lambda_i \leq s_4^2$.\\
This proves the proposition for the eigenvalues of $Q$, and therefore 
also for their positive parts, that are the eigenvalues of $\widehat G=Q_+$. \\[1mm]
To prove the second part of the proposition, it is sufficient to observe that 
\[
|\lambda_i - \widehat \lambda_i |  \leq \bigl\lvert \max\{ \lambda_i , 
\sigma\}- \max\{ \widehat{\lambda}_i , \sigma\}\bigr\rvert + \sigma.
\]
\end{proof}

\vskip2mm
\noindent
Given a threshold $\sigma\leq s_4^2$ such that $8\zeta(\sigma) \leq \sqrt n,$ 
since the bound
\[
F(t)= \max\{ t, \sigma\} B_*(\min\{ t, s_4^2\})
\]
obtained in Proposition~\ref{prop1.23eig} is non-decreasing for any $t \in \mathbb R_+$, 
we get the following result: 

\begin{cor}\label{cor.eigen} Under the same assumptions as in Proposition ~\ref{prop1.23eig}, with probability at least $1-2\epsilon,$ for any $i =1,\dots, d,$
\begin{align*}
|\lambda_i - \widehat \lambda_i | & \leq 2 \max\{ \lambda_1 , \sigma\}
B_* \bigl( \lambda_1 \bigr) + 5 \delta \lVert G \rVert_{F} + \sigma,\\
|\lambda_i - \widehat \lambda_i  | & \leq 2 \max\{\widehat \lambda_1 , \sigma\}  
B_* \bigl( \min \bigl\{ \widehat \lambda_1, s_4^2 \bigr\} \bigr)  + 5 \delta 
\lVert G \rVert_{F} + \sigma.
\end{align*}
\end{cor}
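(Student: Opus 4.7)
The plan is to deduce both bounds from Proposition~\ref{prop1.23eig} by removing the $i$-dependence on the right-hand side through the monotonicity of $F(t) = \max\{t,\sigma\} B_*(\min\{t, s_4^2\})$ that is noted just before the statement. Once each of the two inequalities in Proposition~\ref{prop1.23eig} is rewritten in the compact form $|\lambda_i - \widehat\lambda_i| \leq 2 F(\cdot) + 5\delta\|G\|_F + \sigma$, with argument $\lambda_i$ in the first case and $\widehat\lambda_i$ in the second, I can majorize that argument by $\lambda_1$ or $\widehat\lambda_1$ respectively, since the eigenvalues are indexed in non-increasing order.

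For the first inequality I would start from
\[
|\lambda_i - \widehat\lambda_i| \leq 2 \max\{\lambda_i, \sigma\} B_*(\lambda_i) + 5\delta \|G\|_F + \sigma.
\]
Here a small auxiliary observation is needed: by Cauchy-Schwarz, $\lambda_1 \leq \mathbb{E}(\|X\|^2) \leq \mathbb{E}(\|X\|^4)^{1/2} = s_4^2$, so every eigenvalue satisfies $\lambda_i \leq s_4^2$. Consequently $B_*(\lambda_i) = B_*(\min\{\lambda_i, s_4^2\})$, and the right-hand side equals $2 F(\lambda_i) + 5\delta\|G\|_F + \sigma$. Using $\lambda_i \leq \lambda_1 \leq s_4^2$ and the fact that $F$ is non-decreasing, this is bounded by $2 F(\lambda_1) + 5\delta\|G\|_F + \sigma = 2\max\{\lambda_1, \sigma\} B_*(\lambda_1) + 5\delta\|G\|_F + \sigma$, which is the first claim.

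For the second inequality the same template applies directly: from
\[
|\lambda_i - \widehat\lambda_i| \leq 2\max\{\widehat\lambda_i, \sigma\} B_*(\min\{\widehat\lambda_i, s_4^2\}) + 5\delta\|G\|_F + \sigma = 2F(\widehat\lambda_i) + 5\delta\|G\|_F + \sigma,
\]
the inequality $\widehat\lambda_i \leq \widehat\lambda_1$ combined with the monotonicity of $F$ yields the second claim without needing to know whether $\widehat\lambda_1 \leq s_4^2$ (the truncation by $s_4^2$ is already baked into $F$). The only step requiring any real thought is therefore the auxiliary bound $\lambda_1 \leq s_4^2$ used to rewrite $B_*(\lambda_i)$ as $B_*(\min\{\lambda_i, s_4^2\})$ in the first case; everything else is an immediate consequence of the monotonicity of $F$ stated right before the corollary.
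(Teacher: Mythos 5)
Your argument is correct and is essentially the paper's own: the corollary follows from Proposition~\ref{prop1.23eig} by the monotonicity of $F(t)=\max\{t,\sigma\}B_*(\min\{t,s_4^2\})$ together with $\lambda_i\leq\lambda_1$ and $\widehat\lambda_i\leq\widehat\lambda_1$. Your auxiliary remark that $\lambda_1\leq s_4^2$ (so that $B_*(\lambda_i)=B_*(\min\{\lambda_i,s_4^2\})$) is a detail the paper already uses in the proof of Proposition~\ref{prop1.23eig}, so nothing is missing.
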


\vskip 2mm
\noindent
In order to simplify notation, we define
\begin{equation}\label{eq.defB}
B(t) = 2 \max\{ t, \sigma\} B_*\left( \min\{ t, s_4^2\} \right) + 7 \delta 
\lVert G \rVert_{F} + \sigma.
\end{equation}
Remark that, since $B_*$ is non-increasing, $F$ is non-decreasing
and $B_*(t) \leq 1/4$, for any $a \in \mathbb{R}_+$, we have 
$B(t + a) \leq B(t)  + a/2$.

\section{Robust PCA}\label{spca}

A method to determine the number of relevant components is based on the difference in magnitude between successive eigenvalues. 
In this section we study the projection on the 
$r$ largest eigenvectors $p_{1}, \dots, p_r$ of the Gram matrix, 
assuming that there is a gap in the spectrum of the Gram matrix, 
meaning that 
\begin{equation}\label{defr}
\lambda_{r}- \lambda_{r+1}>0.
\end{equation}
We denote by $\Pi_r$ the orthogonal projector on the $r$ largest eigenvectors $p_{1}, \dots, p_r$ of $G$ and similarly by $\widehat \Pi_r$ the orthogonal projector on the $r$ largest eigenvectors $q_{1}, \dots, q_r$ of 
its estimate $\widehat G$. \\[1mm]
Our goal is to provide a bound on the approximation $\|\Pi_r-\widehat \Pi_r\|_{\infty}$ that does not depend explicitly on the dimension $d$ of the ambient space.

\begin{prop}\label{p51pca}  
With probability at least $1-2\epsilon,$
\[
\|\Pi_r-\widehat \Pi_r\|_{\infty} \leq  \frac{\sqrt{2r}}{\lambda_{r}-\lambda_{r+1} }   B\left(\lambda_1 \right)
\]
where $B$ is defined in equation ~\eqref{eq.defB} and $\lambda_1$ is the largest eigenvalue of the Gram matrix. 
\end{prop}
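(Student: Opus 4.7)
The strategy is two-stage. First, I would lift the pointwise control on quadratic forms furnished by Proposition~\ref{prop1.22q} to an operator-norm bound on the symmetric perturbation $E := \widehat G - G$; second, I would apply a Davis-Kahan-type spectral perturbation estimate for the top-$r$ projectors, with the factor $\sqrt{2r}$ appearing when the principal-angle information is converted to an operator-norm bound via the Frobenius norm.

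For the first stage, fix $\theta \in \mathbb{S}_d$. Since both $a := \theta^\top G \theta$ and $b := \theta^\top \widehat G \theta$ are nonnegative (as $G \succeq 0$ and $\widehat G = Q_+ \succeq 0$), the elementary inequality $|a - b| \leq |\max\{a,\sigma\} - \max\{b,\sigma\}| + \sigma$ combined with the first inequality of Proposition~\ref{prop1.22q} yields, on an event of probability at least $1 - 2\epsilon$,
\[
|\theta^\top E \theta| \leq 2\max\{\theta^\top G\theta, \sigma\}\,B_*(\theta^\top G\theta) + 7\delta\|G\|_F + \sigma.
\]
Since $\lambda_1 \leq \mathbf{Tr}(G) = \mathbb{E}\|X\|^2 \leq s_4^2$ by Cauchy-Schwarz, we have $\theta^\top G\theta \in [0, s_4^2]$, and the monotonicity of $t \mapsto \max\{t,\sigma\}\,B_*(\min\{t, s_4^2\})$ noted just after equation~\eqref{eq.defB} lets us dominate the right-hand side by $B(\lambda_1)$. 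Since $E$ is symmetric, this gives the uniform operator-norm bound $\|E\|_\infty \leq B(\lambda_1)$.

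For the second stage, let $\theta_1, \dots, \theta_r$ denote the principal angles between the $r$-dimensional subspaces $\mathbf{span}\{p_1,\dots,p_r\}$ and $\mathbf{span}\{q_1,\dots,q_r\}$, so that the classical identities $\|\Pi_r - \widehat\Pi_r\|_F^2 = 2\sum_{i=1}^{r}\sin^2\theta_i$ and $\|\Pi_r - \widehat\Pi_r\|_\infty = \max_i \sin\theta_i$ hold. The operator-norm Davis-Kahan $\sin\Theta$ theorem, applied with perturbation $E$ and gap $\lambda_r - \lambda_{r+1}$, gives $\max_i \sin\theta_i \leq \|E\|_\infty/(\lambda_r - \lambda_{r+1})$. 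Combining via $\|\cdot\|_\infty \leq \|\cdot\|_F$ and bounding each $\sin\theta_i$ by the maximum in the Frobenius identity,
\[
\|\Pi_r - \widehat\Pi_r\|_\infty \leq \|\Pi_r - \widehat\Pi_r\|_F \leq \sqrt{2r}\,\max_i \sin\theta_i \leq \frac{\sqrt{2r}\,B(\lambda_1)}{\lambda_r - \lambda_{r+1}},
\]
which is the claimed inequality.

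The main delicate point is the first stage: Proposition~\ref{prop1.22q} provides only a $\theta$-dependent right-hand side through $B_*(\theta^\top G\theta)$, so the uniform bound $B(\lambda_1)$ requires both $\lambda_1 \leq s_4^2$ and the non-obvious monotonicity of $t \mapsto \max\{t, \sigma\}\,B_*(\min\{t, s_4^2\})$, both already established earlier in the paper. The second stage is essentially textbook spectral perturbation; as a self-contained alternative to quoting Davis-Kahan, one could combine the trace inequality $\mathbf{Tr}(G\Pi_r) - \mathbf{Tr}(G\widehat\Pi_r) \geq (\lambda_r - \lambda_{r+1})\,\mathbf{Tr}(\Pi_r\widehat\Pi_r^\perp)$ with $\mathbf{Tr}(\widehat G(\Pi_r - \widehat\Pi_r)) \leq 0$ (the latter expressing that $\widehat\Pi_r$ maximizes $P \mapsto \mathbf{Tr}(\widehat G P)$ over rank-$r$ projectors), obtaining a quadratic inequality in $\|\Pi_r - \widehat\Pi_r\|_F$ whose resolution reproduces the same $\sqrt{2r}$ dependence.
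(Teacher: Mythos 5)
Your first stage is sound and coincides with the first half of the paper's Lemma~\ref{lem1.29}: the uniform bound $\lVert G-\widehat G\rVert_\infty \le B(\lambda_1)$ does follow from Proposition~\ref{prop1.22q} together with $\theta^{\top}G\theta \le \lambda_1 \le s_4^2$ and the monotonicity of $t\mapsto \max\{t,\sigma\}B_*(\min\{t,s_4^2\})$. The gap is in your second stage: the inequality you invoke, $\max_i \sin\theta_i \le \lVert E\rVert_\infty/(\lambda_r-\lambda_{r+1})$, is not the Davis--Kahan theorem and is false in general. Davis--Kahan divides $\lVert E\rVert_\infty$ by the separation between the retained eigenvalues of $G$ and the discarded eigenvalues of the \emph{perturbed} matrix (i.e.\ $\lambda_r-\widehat\lambda_{r+1}$); the variant stated with the within-$G$ gap $\lambda_r-\lambda_{r+1}$ (Yu--Wang--Samworth) carries an extra factor $2$. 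A counterexample to your constant-$1$ version with $r=1$: $G=\mathrm{diag}(1,0)$, $\widehat G=\mathrm{diag}(\tfrac12-\eta,\tfrac12+\eta)$ gives $\lVert E\rVert_\infty=\tfrac12+\eta$ and gap $1$, yet $\sin\theta_1=1$. With the correct factor $2$ your chain only yields $2\sqrt{2r}\,B(\lambda_1)/(\lambda_r-\lambda_{r+1})$ (or $2B(\lambda_1)/(\lambda_r-\lambda_{r+1})$ if you drop the Frobenius detour), which does not recover the stated constant when $r=1$; and as written, every $r$ rests on the false intermediate step. Indeed, had the constant-$1$ statement been true, your detour through the Frobenius norm would have been pointless, since you would directly get the stronger bound $B(\lambda_1)/(\lambda_r-\lambda_{r+1})$ --- a warning sign that the quoted form of the theorem is too good.

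The paper avoids this by not collapsing everything into $\lVert E\rVert_\infty$. It writes $\lVert \Pi_r-\widehat\Pi_r\rVert_\infty$ as a supremum over $\theta\in\mathbf{Im}(\widehat\Pi_r)\cap\mathbb S_d$ (Lemma~\ref{imQ}, using equality of ranks), reduces it by Cauchy--Schwarz to $\sum_{k\le r}\sum_{i>r}\langle q_k,p_i\rangle^2$, and controls, for each $k\le r$, the weighted sum $\sum_i(\lambda_i-\lambda_k)^2\langle q_k,p_i\rangle^2$ via Lemma~\ref{lem1.29}; that lemma combines the operator-norm control of $G-\widehat G$ with the eigenvalue estimates of Corollary~\ref{cor.eigen}, which is exactly the extra information ($\lvert\lambda_k-\widehat\lambda_k\rvert$ small) that a bare Davis--Kahan argument with gap $\lambda_r-\lambda_{r+1}$ discards, and it is what permits a constant below $2$ in front of $B(\lambda_1)/(\lambda_r-\lambda_{r+1})$ at $r=1$. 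If you wish to keep a perturbation-theoretic route, you must either work with the mixed gap $\lambda_r-\widehat\lambda_{r+1}$ and restore $\lambda_r-\lambda_{r+1}$ through the eigenvalue bounds (at the price of a condition like $B(\lambda_1)<\lambda_r-\lambda_{r+1}$ and a worse constant), or prove a sharpened deviation bound that feeds in the eigenvalue estimates --- which is in substance what Lemma~\ref{lem1.29} does.
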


\vskip1mm
\noindent
For the proof we refer to section~\ref{pf1}.

\vskip 2mm
\noindent
We observe that the above proposition provides a bound on the approximation error $\|\Pi_r-\widehat \Pi_r\|_{\infty}$ 
that does not depend explicitly on the dimension $d$ of the ambient space, since $B$ 
is dimension-free. 
However the result relates the quality of the approximation of the orthogonal projector $\Pi_r$ 
by the robust estimator $\widehat \Pi_r$ to the size of the spectral gap and 
in particular the larger the eigengap, the better the approximation is. 
A way to estimate the size of the eigengap is by using the eigenvalues of the robust estimator $\widehat G$, 
since, according to Proposition~\ref{prop1.23eig}, each eigenvalue of $\hat G$ provides a good approximation
of the corresponding eigenvalue of the Gram matrix.

\section{Robust PCA with a smooth cut-off}\label{rpca}

In order to avoid the requirement of a large spectral gap, we can 
interpret the projector $\Pi_r$ as a step function applied 
to the spectrum of the Gram matrix and consider to replace 
$\Pi_r$ with a smooth cut-off of the eigenvalues of $G$ via a 
Lipschitz function.
More specifically, we have in mind to apply to 
the spectrum of $G$ a Lipschitz function that takes the value one 
on the largest eigenvalues and the value zero on the smallest ones.

\vskip 2mm
\noindent 
Let $f$ be a Lipschitz function with Lipschitz constant ${1}/{L}.$  
We decompose the Gram matrix as 
\[
G= UDU^{\top}
\]
where $D= \mathrm{diag}(\lambda_1, \dots, \lambda_d)\in M_d(\mathbb R)$ is the diagonal matrix whose entries are the eigenvalues of $G$ and $U\in M_d(\mathbb R)$ is the orthogonal matrix of eigenvectors of $G.$ 
We define $f(G)$ as 
\[
f(G)=  U \mathrm{diag} \Bigl( f(\lambda_1), \dots, f(\lambda_d) \Bigr) U^{\top}
\]
where, for any $i,j \in \{1,\dots, d\},$
\[
|f(\lambda_i) - f(\lambda_j)| \leq \frac{1}{L} | \lambda_i - \lambda_j|.
\]

\vskip 2mm
\noindent
We provide some results on the estimate of $f(G)$, the image of the Gram 
matrix by the smooth cut-off $f$, in terms of the operator norm $\|\cdot\|_{\infty}$ and of the Frobenius norm $\|\cdot\|_F.$ 
We start with a general result.

\begin{prop}\label{fbn} Let $M,$ $M' \in M_d(\mathbb R)$ be two symmetric matrices. We denote by $\mu_1, \dots, \mu_d$ the eigenvalues of $M$ related to the orthonormal basis of eigenvectors $p_1,\dots, p_d$ and by $\mu'_1, \dots, \mu'_d$ the eigenvalues of $M'$ related to the orthonormal basis of eigenvectors $q_1,\dots, q_d.$ We have 
\begin{equation}\label{eq_fbn} \| M-M'\|_F^2 = \sum_{i,k=1}^d (  \mu_i -  \mu'_k)^2 \langle p_i, q_k\rangle^2.
\end{equation}
Moreover, let $f$ be a ${1}/{L}$-Lipschitz function. We have 
\begin{equation}\label{rcorf} \| f(M)-f(M')\|_F \leq \frac{1}{L} \| M-M'\|_F .
\end{equation}
\end{prop}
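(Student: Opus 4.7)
For the first identity, the plan is to expand the Frobenius norm in a doubly indexed orthonormal basis. Specifically, I would use the fact that for any symmetric matrix $A$ and any two orthonormal bases $(p_i)$ and $(q_k)$ of $\mathbb R^d$, one has
\[
\|A\|_F^2 = \mathbf{Tr}(A^{\top}A) = \sum_{k=1}^d \|A q_k\|^2 = \sum_{i,k=1}^d \langle p_i, A q_k\rangle^2,
\]
since $(p_i)$ is orthonormal. Applying this to $A = M - M'$ and using the spectral identities $Mp_i = \mu_i p_i$ and $M'q_k = \mu'_k q_k$, I get
\[
\langle p_i, (M-M')q_k\rangle = \mu_i \langle p_i, q_k\rangle - \mu'_k \langle p_i, q_k\rangle = (\mu_i - \mu'_k)\langle p_i, q_k\rangle,
\]
where for the first term I used the symmetry $\langle p_i, M q_k\rangle = \langle Mp_i, q_k\rangle = \mu_i \langle p_i, q_k\rangle$. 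Squaring and summing over $i,k$ yields the announced identity \eqref{eq_fbn}.

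For the second inequality, I would simply apply the identity \eqref{eq_fbn} to the pair of matrices $f(M)$ and $f(M')$. By the very definition of the functional calculus for symmetric matrices given just before the proposition, $f(M)$ admits $(p_i)$ as an orthonormal basis of eigenvectors with eigenvalues $f(\mu_i)$, and similarly $f(M')$ admits $(q_k)$ as eigenvectors with eigenvalues $f(\mu'_k)$. Therefore
\[
\|f(M) - f(M')\|_F^2 = \sum_{i,k=1}^d \bigl(f(\mu_i) - f(\mu'_k)\bigr)^2 \langle p_i, q_k\rangle^2.
\]
The $1/L$-Lipschitz assumption gives $(f(\mu_i) - f(\mu'_k))^2 \leq L^{-2}(\mu_i - \mu'_k)^2$ for every pair $(i,k)$, so plugging this termwise into the sum and invoking \eqref{eq_fbn} once more produces $\|f(M) - f(M')\|_F^2 \leq L^{-2}\|M - M'\|_F^2$, which is the claim.

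No real obstacle is expected: both parts reduce to bookkeeping once one expands the Frobenius norm in the mixed basis $(p_i)\otimes(q_k)$. The only subtle point worth flagging explicitly in the write-up is the use of symmetry of $M$ to obtain $\langle p_i, Mq_k\rangle = \mu_i\langle p_i, q_k\rangle$ rather than applying $M$ on the side where it was diagonalized, and the fact that eigenvalues with multiplicities and the non-uniqueness of the eigenbases cause no trouble because the identity \eqref{eq_fbn} holds for any choice of orthonormal eigenbases.
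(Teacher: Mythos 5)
Your proof is correct and follows essentially the same route as the paper: both expand $\lVert M-M'\rVert_F^2$ over the mixed eigenbases $(p_i)$ and $(q_k)$ — you via the coefficients $\langle p_i,(M-M')q_k\rangle$, the paper via the outer-product decomposition and the trace — and then obtain \eqref{rcorf} by applying the identity to $f(M),f(M')$ and using the Lipschitz bound termwise. No gaps.
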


\vskip2mm
\noindent
For the proof we refer to section ~\ref{pf2}.\\[1mm]
Let us now present the bound on the approximation error $\| f(G)- f(\widehat G) \|_{\infty}$ in terms of the
operator norm.

\begin{prop}\label{supnorm}{\bf (Operator norm)}
With probability at least $1-2\epsilon$, 
for any $1/L$-Lipschitz function $f$, 
\begin{align*}
\| f(G)- f(\widehat G) \|_{\infty}& \leq \min_{r \in \{1, \dots, 
d \}} L^{-1}\left( B\left(\lambda_1 \right)+ \sqrt{4r B\left(\lambda_1 \right)^2 + 2\sum_{i=r+1}^{d} \lambda_i^2} \; \right),
\end{align*}
 where $B$ is defined in equation ~\eqref{eq.defB} and $\lambda_1 \geq \dots \geq \lambda_d$ are the eigenvalues of $G$.
\end{prop}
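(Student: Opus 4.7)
The plan is to combine the Frobenius Lipschitz bound of Proposition \ref{fbn}, the majoration $\|\cdot\|_\infty \leq \|\cdot\|_F$, the uniform quadratic-form control $|\theta^\top (\widehat G - G)\theta| \leq B(\lambda_1)$ implied by Proposition \ref{prop1.22q}, and the eigenvalue matching $|\lambda_i - \widehat\lambda_i| \leq B(\lambda_1)$ from Corollary \ref{cor.eigen}. First I would replace $f$ by $f - f(0)$ (which leaves $f(G) - f(\widehat G)$ unchanged and preserves the Lipschitz constant) so that $f(0) = 0$.

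For each integer $r$ I would introduce the hybrid matrix
\[
M = \sum_{i=1}^{r} \widehat\lambda_i\, p_i p_i^\top + \sum_{i=r+1}^{d} \lambda_i\, p_i p_i^\top,
\]
whose eigenvectors coincide with those of $G$ but whose top $r$ eigenvalues are those of $\widehat G$, and split the error via the triangle inequality $\|f(G) - f(\widehat G)\|_\infty \leq \|f(G) - f(M)\|_\infty + \|f(M) - f(\widehat G)\|_\infty$. The first summand equals the rank-$r$ diagonal operator $\sum_{i \leq r}[f(\lambda_i) - f(\widehat\lambda_i)]\, p_i p_i^\top$, whose operator norm is at most $L^{-1}\max_{i \leq r}|\lambda_i - \widehat\lambda_i| \leq L^{-1} B(\lambda_1)$ by the Lipschitz assumption on $f$ together with Corollary \ref{cor.eigen}; this produces the additive $L^{-1} B(\lambda_1)$ term outside the square root.

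For the second summand I would use $\|f(M) - f(\widehat G)\|_\infty \leq \|f(M) - f(\widehat G)\|_F \leq L^{-1}\|M - \widehat G\|_F$ via Proposition \ref{fbn}, and prove the key estimate $\|M - \widehat G\|_F^2 \leq 4 r\, B(\lambda_1)^2 + 2\sum_{i > r}\lambda_i^2$. Using the identity $\|A\|_F^2 = \sum_i \|A p_i\|^2$ applied in the common eigenbasis $\{p_i\}$ of $M$ and $G$, this Frobenius norm squared splits as
\[
\sum_{i = 1}^{r}\bigl\|(\widehat G - \widehat\lambda_i I) p_i\bigr\|^2 + \sum_{i = r+1}^{d}\bigl\|(\widehat G - \lambda_i I) p_i\bigr\|^2 .
\]
For $i \leq r$ I would decompose $(\widehat G - \widehat\lambda_i I) p_i = (\widehat G - G) p_i + (\lambda_i - \widehat\lambda_i) p_i$ and bound each summand by $B(\lambda_1)$ using the uniform operator-norm control $\|\widehat G - G\|_\infty \leq B(\lambda_1)$ and Corollary \ref{cor.eigen}, obtaining at most $4 B(\lambda_1)^2$ per term and $4 r B(\lambda_1)^2$ in total. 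For $i > r$, since $G p_i = \lambda_i p_i$ the sum reduces to $\|(\widehat G - G)(I - P_r)\|_F^2$, where $P_r = \sum_{i \leq r} p_i p_i^\top$.

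The main obstacle will be controlling $\|(\widehat G - G)(I - P_r)\|_F^2$ dimension-freely by $2 \sum_{i > r}\lambda_i^2$: the pointwise bound $\|(\widehat G - G) p_i\|^2 \leq B(\lambda_1)^2$ only yields a dimension-dependent $(d - r) B(\lambda_1)^2$. The plan for this step is to decompose $(\widehat G - G)(I - P_r) = \widehat G(I - P_r) - (G - G_r)$, estimate $\|\widehat G (I - P_r)\|_F^2$ by relating it to $\|G - G_r\|_F^2 = \sum_{i > r} \lambda_i^2$ through the PAC-Bayesian quadratic-form control applied in every direction of $\operatorname{Im}(I - P_r)$, and absorb any residual rank-$r$ deviation into the $4 r B(\lambda_1)^2$ budget already accounted for. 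Combining the two pieces and taking the minimum over $r$ then yields the stated inequality.
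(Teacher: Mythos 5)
Your splitting through the hybrid matrix $M=\sum_{i\le r}\widehat\lambda_i\,p_ip_i^{\top}+\sum_{i>r}\lambda_i\,p_ip_i^{\top}$ breaks exactly at the step you yourself flag as the main obstacle, and that step cannot be repaired. The key estimate $\lVert M-\widehat G\rVert_F^2\le 4rB(\lambda_1)^2+2\sum_{i>r}\lambda_i^2$ is not derivable from what the event gives you: Proposition \ref{prop1.22q} and Corollary \ref{cor.eigen} are per-direction (operator-norm type) controls, and they are perfectly compatible with $\widehat G$ carrying spurious mass of order $\sigma$ in \emph{every} direction of $\mathbf{Im}(I-P_r)$. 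Concretely, if $G$ has rank $r$ and $\widehat G=G+\sigma(I-P_r)$, then all the quadratic-form constraints of Proposition \ref{prop1.22q} and the eigenvalue bounds of Corollary \ref{cor.eigen} hold with room to spare (for tail directions both $\theta^{\top}G\theta$ and $\theta^{\top}\widehat G\theta$ are below the threshold $\sigma$), yet $\lVert(\widehat G-G)(I-P_r)\rVert_F^2=(d-r)\sigma^2$ while $\sum_{i>r}\lambda_i^2=0$; since $B(\lambda_1)$ does not depend on $d$, your claimed bound $4rB(\lambda_1)^2$ is violated once $d$ is large. Your proposed remedy --- applying the quadratic-form control ``in every direction of $\mathbf{Im}(I-P_r)$'' and comparing with $\lVert G-G_r\rVert_F^2$ --- cannot work, because summing per-direction bounds over the $d-r$ orthogonal tail directions is precisely what reintroduces the dimension: no summable (Frobenius-type) control of the tail block of $\widehat G$ is available. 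The deeper issue is the choice of hybrid: with your $M$, the second term $\lVert f(M)-f(\widehat G)\rVert$ still contains both the eigenvector rotation and the tail eigenvalue noise of $\widehat G$, and relaxing it to the Frobenius norm is too lossy there (in the example above $\lVert f(G)-f(\widehat G)\rVert_\infty\le L^{-1}\sigma$, but your route inflates it to order $L^{-1}\sqrt{d-r}\,\sigma$).

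The paper takes the opposite hybrid, $H=\sum_k\lambda_k\,q_kq_k^{\top}$ (true eigenvalues on the estimated eigenvectors), and this is what makes the argument dimension-free. There $f(H)$ and $f(\widehat G)$ share the eigenbasis $(q_k)$, so $\lVert f(H)-f(\widehat G)\rVert_\infty\le L^{-1}\max_k\lvert\lambda_k-\widehat\lambda_k\rvert\le L^{-1}B(\lambda_1)$ is a supremum, handled by Corollary \ref{cor.eigen} with no Frobenius step at all; and $\lVert f(G)-f(H)\rVert_\infty\le\lVert f(G)-f(H)\rVert_F\le L^{-1}\lVert G-H\rVert_F$ with $\lVert G-H\rVert_F^2=\sum_{i,k}(\lambda_i-\lambda_k)^2\langle p_i,q_k\rangle^2$ involving only the \emph{true} spectrum, so the tail--tail block is at most $2\sum_{i>r}\lambda_i^2$ and each mixed block is at most $2rB(\lambda_1)^2$ by Lemma \ref{lem1.29}, giving the $4rB(\lambda_1)^2$. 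In short: the eigenvalue error must be paid in operator norm and the eigenvector error in Frobenius norm weighted by the true eigenvalues; your decomposition does the opposite in the tail. The parts of your proposal that are sound are the first summand (which correctly yields the additive $L^{-1}B(\lambda_1)$) and the $4rB(\lambda_1)^2$ contribution from the top $r$ columns of $M-\widehat G$; the uncontrolled piece is precisely $\lVert(\widehat G-G)(I-P_r)\rVert_F^2$.
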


\vskip2mm
\noindent
For the proof we refer to section~\ref{pf3}.\\[1mm]
Observe that with respect to the bound obtained in Proposition~\ref{p51pca}, we have replaced the inverse of the size of the gap
with the Lipschitz constant. 
Moreover, in the case it exists a gap $\lambda_r- \lambda_{r+1}>0$, there exists a Lipschitz function $f$ 
such that $\Pi_r = f(G)$ and whose Lipschitz constant is exactly the inverse of the size of the gap. 
Otherwise, if we want to use $f$ with a better Lipschitz constant, we have to approximate $\Pi_r$ with the smoother
approximate projection $f(G)$. 

\vskip 3mm
\noindent
Slightly changing the definition of the estimator we can obtain a bound for the approximation error in terms of the Frobenius norm. 
Instead of considering $\widehat G = \sum_{i=1}^d \widehat \lambda_i q_i q_i^{\top}$ we consider the matrix
$$\widetilde G = \sum_{i=1}^d \widetilde \lambda_i q_i q_i^{\top}$$
with eigenvectors $q_1,\dots, q_d$ and eigenvalues 
\[
\widetilde \lambda_i = \left[\widehat \lambda_i -B(\widehat \lambda_i) \right]_+
\]
where we recall that $B$ is defined in equation ~\eqref{eq.defB}. 
We observe that, in the event of probability at least $1-2\epsilon$ described in Corollary~\ref{cor.eigen}, for any $i = 1,\dots, d,$
\[\widetilde \lambda_i \leq \lambda_i.\]

\vskip2mm
\noindent
According to Proposition~\ref{fbn}, we first present a result on the approximation error $\| G - \widetilde G\|_F$.

\begin{prop}\label{Fnorm} {\bf (Frobenius norm)}
With probability at least $1-2\epsilon,$
\[
\| G - \widetilde G\|_F \leq 
\min_{r \in \{1 \dots, d\}} \sqrt{13 r B(\lambda_1)^2+ 2 \sum_{i=r+1}^{d}\lambda_i^2},
\]
 where $B$ is defined in equation ~\eqref{eq.defB} and $\lambda_1 \geq \dots \geq \lambda_d$ are the eigenvalues of $G$.
\end{prop}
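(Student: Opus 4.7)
The strategy combines Proposition~\ref{fbn} with an orthogonal Pythagorean decomposition of $G-\widetilde G$ relative to the empirical top-$r$ projector $\widehat\Pi_r=\sum_{k\leq r}q_kq_k^{\top}$, together with two preliminary a priori estimates. First, on the event of probability at least $1-2\epsilon$ of Corollary~\ref{cor.eigen}, one has $|\lambda_i-\widehat\lambda_i|\leq B(\lambda_1)$; in particular $\widehat\lambda_i\leq\lambda_1+B(\lambda_1)$, so the remark following~\eqref{eq.defB} gives $B(\widehat\lambda_i)\leq\tfrac{3}{2}B(\lambda_1)$. Substituting this into the definition $\widetilde\lambda_i=[\widehat\lambda_i-B(\widehat\lambda_i)]_+$ and splitting cases according to the sign of $\widehat\lambda_i-B(\widehat\lambda_i)$ yields the uniform pointwise bound $0\leq\lambda_i-\widetilde\lambda_i\leq\tfrac{5}{2}B(\lambda_1)$. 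Second, applying Proposition~\ref{prop1.22q} with $\theta=q_k$ gives $q_k^{\top}Gq_k\geq\widehat\lambda_k-B(\widehat\lambda_k)$; combined with $q_k^{\top}Gq_k\geq 0$ this delivers the Rayleigh-quotient inequality $q_k^{\top}Gq_k\geq\widetilde\lambda_k$ for every $k$.

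Fixing $r$, since $\widetilde G$ is diagonal in the basis $(q_k)$ one has $(I-\widehat\Pi_r)\widetilde G\widehat\Pi_r=0$, so the Pythagorean decomposition reads
\[
\|G-\widetilde G\|_F^2 \;=\; \|\widehat\Pi_r(G-\widetilde G)\widehat\Pi_r\|_F^2 \;+\; 2\|(I-\widehat\Pi_r)G\widehat\Pi_r\|_F^2 \;+\; \|(I-\widehat\Pi_r)(G-\widetilde G)(I-\widehat\Pi_r)\|_F^2.
\]
The first two summands involve operators of rank at most $r$; the inequality $\|PAP\|_F\leq\sqrt{\mathrm{rank}(P)}\,\|A\|_{\infty}$, together with the operator-norm bound $\|G-\widehat G\|_{\infty}\leq B(\lambda_1)$ (obtained by taking the supremum of Proposition~\ref{prop1.22q} over $\theta\in\mathbb S_d$) and $\|\widehat G-\widetilde G\|_{\infty}\leq\tfrac{3}{2}B(\lambda_1)$ (from the eigenvalue perturbation of the preliminary step), controls them by a constant multiple of $rB(\lambda_1)^2$.

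The main obstacle is the tail summand: without a spectral-gap hypothesis no Davis--Kahan bound is available to align the $q_k$ with the $p_i$ for $k>r$. The key idea is to expand $\|(I-\widehat\Pi_r)(G-\widetilde G)(I-\widehat\Pi_r)\|_F^2$ using $(I-\widehat\Pi_r)\widetilde G(I-\widehat\Pi_r)=\sum_{k>r}\widetilde\lambda_kq_kq_k^{\top}$; this produces a cross term $-2\sum_{k>r}\widetilde\lambda_k(q_k^{\top}Gq_k)$, which by the Rayleigh-quotient inequality is dominated by $-2\sum_{k>r}\widetilde\lambda_k^2$. This cancellation against the diagonal piece $\sum_{k>r}\widetilde\lambda_k^2$ leaves a pure ``off-diagonal'' expression $\sum_{k,l>r}(q_k^{\top}Gq_l)^2$ minus a non-negative correction, which is in turn estimated by $2\sum_{i>r}\lambda_i^2$ (together with a small rank-$r$ contribution of order $rB(\lambda_1)^2$ coming from the difference $G-\widehat G$ on the orthogonal block). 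Summing the three contributions and minimising over $r$ yields the stated bound.
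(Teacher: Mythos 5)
Your preliminary steps are fine (on the event of Corollary~\ref{cor.eigen} one indeed has $|\lambda_i-\widehat\lambda_i|\leq B(\lambda_1)$, $B(\widehat\lambda_i)\leq\tfrac32 B(\lambda_1)$, $q_k^{\top}Gq_k\geq\widetilde\lambda_k$, and $\|G-\widehat G\|_{\infty}\leq B(\lambda_1)$), the Pythagorean block decomposition with respect to $\widehat\Pi_r$ is correct since $(I-\widehat\Pi_r)\widetilde G\widehat\Pi_r=0$, and the two rank-$r$ blocks are controlled as you say. But the proof has a genuine gap at exactly the decisive point: the bound on the tail block $\|(I-\widehat\Pi_r)(G-\widetilde G)(I-\widehat\Pi_r)\|_F^2$ by $2\sum_{i>r}\lambda_i^2$ plus $O(r)B(\lambda_1)^2$ is asserted, not proved, and the cancellation you invoke does not deliver it. After using $q_k^{\top}Gq_k\geq\widetilde\lambda_k$ you are left with $\sum_{k,l>r}(q_k^{\top}Gq_l)^2-\sum_{k>r}\widetilde\lambda_k^2$. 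Its off-diagonal part equals $\sum_{k\neq l,\,k,l>r}\bigl(q_k^{\top}(G-\widehat G)q_l\bigr)^2$ (since $\widehat G$ is diagonal in the $q$-basis), and the only controls available on the event are per-direction bounds $\|(G-\widehat G)q_k\|\leq B(\lambda_1)$; summing them over the tail block gives $(d-r)B(\lambda_1)^2$, which is dimension-dependent and not of the claimed form. The diagonal leftover $\sum_{k>r}\bigl[(q_k^{\top}Gq_k)^2-\widetilde\lambda_k^2\bigr]$, bounded termwise via $q_k^{\top}Gq_k\leq\widehat\lambda_k+B(\lambda_1)$ and $\widetilde\lambda_k\geq\lambda_k-\tfrac52 B(\lambda_1)$, is of order $B(\lambda_1)\sum_{k>r}\lambda_k+(d-r)B(\lambda_1)^2$, i.e.\ a trace-type term, again not $\sum_{i>r}\lambda_i^2+rB(\lambda_1)^2$. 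The alternative splitting $G=G_{\leq r}+G_{>r}$ inside the tail block gives $\|(I-\widehat\Pi_r)G_{\leq r}(I-\widehat\Pi_r)\|_F^2\leq r\,(2\lambda_{r+1}+2B(\lambda_1))^2$, and the stray $r\lambda_{r+1}^2$ cannot be absorbed into $\sum_{i>r}\lambda_i^2$ or $rB(\lambda_1)^2$ without a further argument quantifying how much of the top eigenspaces can leak into $\mathbf{span}\{q_k,\,k>r\}$; that argument (essentially: leakage of a direction with eigenvalue $\lambda_i$ into $q_k$, $k>r$, forces $\widehat\lambda_k$, hence $\lambda_k$, to be comparable to $\lambda_i$) is the actual heart of the proof and is missing from your sketch.

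The paper avoids this difficulty altogether by never working in the $q$-basis blocks: it expands $\|G-\widetilde G\|_F^2=\sum_{i,k}(\lambda_i-\widetilde\lambda_k)^2\langle p_i,q_k\rangle^2$ (Proposition~\ref{fbn}) and splits the \emph{double index} into $\{i\leq r\}$, $\{k\leq r\}$ and $\{i,k>r\}$. The mixed blocks are handled by Lemma~\ref{lem1.29}, i.e.\ by only $r$ of the per-direction bounds $\sum_i(\lambda_i-\widehat\lambda_k)^2\langle p_i,q_k\rangle^2\leq B(\lambda_1)^2$ (whence $rB(\lambda_1)^2$, with the factor $13$ coming from $(\lambda_i-\widetilde\lambda_k)^2\leq2(\lambda_i-\widehat\lambda_k)^2+2B(\widehat\lambda_k)^2$ and $B(\widehat\lambda_1)\leq\tfrac32B(\lambda_1)$), while the $\{i,k>r\}$ block is bounded crudely by $2\sum_{i>r}\lambda_i^2$ using $(\lambda_i-\widetilde\lambda_k)^2\leq\lambda_i^2+\widetilde\lambda_k^2$, $\widetilde\lambda_k\leq\lambda_k$ and $\sum_k\langle p_i,q_k\rangle^2=1$. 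If you want to salvage your block approach, you must supply a dimension-free bound for the tail block along these lines (in effect re-deriving the mixed-basis estimate); as it stands, the step ``which is in turn estimated by $2\sum_{i>r}\lambda_i^2$ plus a small rank-$r$ contribution'' is precisely what has to be proved.
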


\vskip2mm
\noindent
For the proof we refer to section~\ref{pf4}.

\vskip 2mm
\noindent
To obtain a bound on $\| f(G) - f(\widetilde G) \|_F$ it is sufficient to combine the above proposition with Proposition~\ref{rcorf}. 

\begin{cor}\label{cor1.30pca} With the same notation as in Proposition~\ref{Fnorm}, 
with probability at least $1-2\epsilon$, for any $1/L$-Lipschitz 
function $f$, 
\[
\| f(G) - f(\widetilde G) \|_F \leq \min_{r \in \{1, \dots, d\}} L^{-1} \sqrt{13 \, r B(\lambda_1)^2+ 2 \sum_{i=r+1}^{d}\lambda_i^2}.
\]
\end{cor}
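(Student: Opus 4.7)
The proof is a one-step composition of two already established results, exactly as the preceding remark in the text suggests. The plan is to apply inequality~\eqref{rcorf} of Proposition~\ref{fbn} with $M = G$ and $M' = \widetilde G$ to push the $1/L$-Lipschitz map $f$ outside the Frobenius norm, thereby reducing the problem to bounding $\|G - \widetilde G\|_F$. Both matrices are symmetric: $G$ by definition, and $\widetilde G = \sum_{i=1}^d \widetilde{\lambda}_i\, q_i q_i^{\top}$ by construction, so the hypothesis of Proposition~\ref{fbn} is satisfied and we obtain
\[
\|f(G) - f(\widetilde G)\|_F \,\leq\, \frac{1}{L}\, \|G - \widetilde G\|_F.
\]
This first step is purely algebraic and deterministic.

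The second step is to substitute the bound from Proposition~\ref{Fnorm} into the right-hand side. On the event of probability at least $1-2\epsilon$ furnished by that proposition, for every $r \in \{1,\dots,d\}$,
\[
\|G - \widetilde G\|_F \,\leq\, \sqrt{\,13\, r\, B(\lambda_1)^2 + 2 \sum_{i=r+1}^{d} \lambda_i^2\,}.
\]
Chaining the two inequalities and then taking the minimum over $r$ yields the claimed bound. Since only the second inequality carries probabilistic content, the joint statement holds on the same event, so the probability level $1-2\epsilon$ is preserved.

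There is essentially no obstacle here; the real work was absorbed into Proposition~\ref{fbn} (whose bound \eqref{rcorf} rests on the identity~\eqref{eq_fbn} applied to a Lipschitz deformation of the spectrum) and into Proposition~\ref{Fnorm} (whose proof uses Corollary~\ref{cor.eigen} together with the definition of $\widetilde\lambda_i$). The only point worth flagging explicitly in the final write-up is that $f$ is allowed to be any $1/L$-Lipschitz function, so no additional assumption needs to be transferred from $f$ to $\widetilde G$ beyond the symmetry already guaranteed by its spectral representation.
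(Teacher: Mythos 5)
Your proof is correct and is exactly the argument the paper intends: combine the Lipschitz contraction bound \eqref{rcorf} of Proposition~\ref{fbn} (applied to the symmetric matrices $G$ and $\widetilde G$) with the bound on $\|G-\widetilde G\|_F$ from Proposition~\ref{Fnorm}, noting that only the latter is probabilistic so the event of probability $1-2\epsilon$ is unchanged. No gaps.
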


\vskip 2mm
\noindent
In the previous bounds, the optimal choice of the dimension parameter 
$r$ depends on the distribution of the eigenvalues of the 
Gram matrix $G$. Nevertheless, it is possible to upper 
bound what happens when this distribution of eigenvalues 
is the worst possible.\\[1mm]
Observe that
\[
 \sum_{i=r+1}^{d}\lambda_i^2 \leq \lambda_{r+1}\mathbf{Tr}(G)
 \]
 and also $r\lambda_{r+1} \leq \mathbf{Tr}(G),$ so that 
  \[
 \sum_{i=r+1}^{d}\lambda_i^2 \leq r^{-1}\mathbf{Tr}(G)^2.
 \]
Hence, if we consider for example the case where the approximation error is evaluated in terms of the Frobenius norm, the worst case formulation of Corollary ~\ref{cor1.30pca} is obtained choosing 
\[
 r= \Bigl\lceil \sqrt{2/13} \mathbf{Tr}(G) B(\lambda_1)^{-1} \Bigr\rceil
\]
and, in this case, it can be restated as follows. 
 
 \begin{cor}
With probability at least $1-2\epsilon,$
\[
\| f(G) - f(\widetilde G) \|_F \leq L^{-1} \sqrt{11 \, \mathbf{Tr}(G) B(\lambda_1)+13 \, B(\lambda_1)^2}.
\]
\end{cor}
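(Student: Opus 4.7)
The plan is to instantiate Corollary~\ref{cor1.30pca} at the specific dimension parameter $r$ suggested in the paragraph just before the corollary, and then bound the resulting expression by routine algebra. Everything is already in place: Corollary~\ref{cor1.30pca} gives
\[
\| f(G) - f(\widetilde G) \|_F \leq L^{-1} \sqrt{13 \, r B(\lambda_1)^2+ 2 \sum_{i=r+1}^{d}\lambda_i^2}
\]
for every $r\in\{1,\dots,d\}$, and the two elementary inequalities $\sum_{i=r+1}^d \lambda_i^2 \le \lambda_{r+1} \mathbf{Tr}(G)$ and $r \lambda_{r+1} \le \mathbf{Tr}(G)$ combine into $\sum_{i=r+1}^d \lambda_i^2 \le r^{-1} \mathbf{Tr}(G)^2$, so that the bound becomes
\[
\| f(G) - f(\widetilde G) \|_F \leq L^{-1} \sqrt{13 r B(\lambda_1)^2+ 2 r^{-1} \mathbf{Tr}(G)^2}.
\]

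Next I would choose $r = \lceil \sqrt{2/13}\, \mathbf{Tr}(G) B(\lambda_1)^{-1} \rceil$, which is essentially the minimizer of $13 r B(\lambda_1)^2 + 2 r^{-1} \mathbf{Tr}(G)^2$ over positive reals, and use the two-sided estimate
\[
\sqrt{2/13}\, \mathbf{Tr}(G) B(\lambda_1)^{-1} \;\leq\; r \;\leq\; \sqrt{2/13}\, \mathbf{Tr}(G) B(\lambda_1)^{-1} + 1.
\]
The upper bound on $r$ gives $13 r B(\lambda_1)^2 \le \sqrt{26}\, \mathbf{Tr}(G) B(\lambda_1) + 13 B(\lambda_1)^2$, and the lower bound on $r$ gives $2 r^{-1} \mathbf{Tr}(G)^2 \le \sqrt{26}\, \mathbf{Tr}(G) B(\lambda_1)$. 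Adding these two estimates yields
\[
13 r B(\lambda_1)^2 + 2 r^{-1} \mathbf{Tr}(G)^2 \;\leq\; 2\sqrt{26}\, \mathbf{Tr}(G) B(\lambda_1) + 13 B(\lambda_1)^2.
\]

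Finally, since $2\sqrt{26} < 11$, one concludes
\[
\| f(G) - f(\widetilde G) \|_F \;\leq\; L^{-1} \sqrt{11\, \mathbf{Tr}(G) B(\lambda_1) + 13\, B(\lambda_1)^2},
\]
which is the desired inequality. No step is genuinely difficult here; the only mild subtlety is handling the ceiling in the definition of $r$, which requires separately controlling the two terms against the same ceiling via the double-sided bound above. The numerical check $2\sqrt{26} \leq 11$ (equivalently $104 \leq 121$) is the place where the slightly loose constant $11$ in the statement is paid for to absorb the $+1$ coming from the ceiling.
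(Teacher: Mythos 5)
Your proposal is correct and follows essentially the same route the paper intends (the paper only sketches this corollary in the preceding paragraph): instantiate Corollary~\ref{cor1.30pca} at $r=\bigl\lceil \sqrt{2/13}\,\mathbf{Tr}(G)B(\lambda_1)^{-1}\bigr\rceil$, use $\sum_{i=r+1}^d\lambda_i^2\leq r^{-1}\mathbf{Tr}(G)^2$, and absorb the ceiling via $2\sqrt{26}\leq 11$. Your arithmetic checks out, so there is nothing to add.
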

\noindent
This proposition shows that the worst case speed is not slower than $n^{-1/4}$.
We do not know whether this rate is optimal in the worst case. 
We could in the same way obtain a worst case corollary for Proposition~\ref{supnorm}. 

\section{Proofs}
In this section we give the proofs of the results presented in the previous sections. 
More precisely, section ~\ref{pf1} refers to Proposition~\ref{p51pca},
section ~\ref{pf2} refers to Proposition~\ref{fbn}, 
section ~\ref{pf3} refers to Proposition~\ref{supnorm} and 
section ~\ref{pf4} refers to Proposition~\ref{Fnorm}.
We start with a technical lemma that will be useful in several proofs.

\begin{lem}\label{lem1.29} With probability at least $1-2\epsilon,$ for any $k\in \{1,\dots, d\},$ the two inequalities hold together
\begin{align}
\label{lem1.29eq1}
 \sum_{i=1}^d \left( \lambda_i-\lambda_k \right)^2 \langle q_k, p_i \rangle^2 & \leq 2B\left(\lambda_1 \right)^2 \\
  \label{lem1.29eq2}
 \sum_{i=1}^d \left( \lambda_i-\widehat \lambda_k \right)^2 \langle q_k, p_i \rangle^2 & \leq B\left(\lambda_1 \right)^2,
 \end{align}
where $B(t)= 2 \max\{ t, \sigma\} B_*\left( \min\{ t, s_4^2\} \right) + 7 \delta 
\lVert G \rVert_{F} + \sigma$ is defined in equation ~\eqref{eq.defB}.
\end{lem}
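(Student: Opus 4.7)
\medskip\noindent
\textbf{Proof plan.}
The key algebraic identity is that, for any scalar $\mu$, the spectral decomposition $G = \sum_i \lambda_i p_i p_i^{\top}$ rewrites
\[
\sum_{i=1}^d (\lambda_i - \mu)^2 \langle q_k, p_i\rangle^2 = \|(G - \mu I)\, q_k\|^2,
\]
so both claims become bounds on squared Euclidean norms. Substituting $\mu = \widehat\lambda_k$ and using $\widehat G\, q_k = \widehat\lambda_k\, q_k$ gives $(G - \widehat\lambda_k I)q_k = (G - \widehat G)q_k$, while $\mu = \lambda_k$ splits as $(G - \lambda_k I)q_k = (G - \widehat G)q_k + (\widehat\lambda_k - \lambda_k)q_k$.

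The main preliminary step is to upgrade Proposition~\ref{prop1.22q} to the operator-norm estimate $\|G - \widehat G\|_{\infty} \leq B(\lambda_1)$. For any unit $\theta$, since $\theta^{\top} G\, \theta \leq \lambda_1 \leq s_4^2$ and the map $t \mapsto \max\{t,\sigma\}B_*(\min\{t,s_4^2\})$ is non-decreasing (as noted before Corollary~\ref{cor.eigen}), Proposition~\ref{prop1.22q} yields $|\max\{\theta^{\top} \widehat G\, \theta, \sigma\} - \max\{\theta^{\top} G\, \theta, \sigma\}| \leq 2\max\{\lambda_1,\sigma\} B_*(\lambda_1) + 7\delta\|G\|_F$. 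Removing the two outer maxima costs at most an additive $\sigma$ (both quadratic forms are non-negative since $G, \widehat G \succeq 0$), so $|\theta^{\top}(\widehat G - G)\theta| \leq B(\lambda_1)$; symmetry of $G - \widehat G$ then gives the operator-norm bound.

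Inequality~\eqref{lem1.29eq2} follows immediately: $\|(G-\widehat G)q_k\|^2 \leq \|G - \widehat G\|_{\infty}^2 \leq B(\lambda_1)^2$. For inequality~\eqref{lem1.29eq1}, I would combine this with the eigenvalue estimate $|\widehat\lambda_k - \lambda_k| \leq B(\lambda_1)$ supplied by Corollary~\ref{cor.eigen}, and control both summands in the decomposition of $(G-\lambda_k I)q_k$ written above.

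The main delicate point I anticipate is matching the exact constant $2$ in~\eqref{lem1.29eq1}: a naive parallelogram inequality $\|u+v\|^2 \leq 2\|u\|^2 + 2\|v\|^2$ only yields $4B(\lambda_1)^2$. Sharpening this to the claimed $2B(\lambda_1)^2$ should rely on the orthogonal decomposition of $(G-\widehat G)q_k$ relative to $q_k$, writing it as $(q_k^{\top}(G-\widehat G)q_k)\, q_k$ plus a piece perpendicular to $q_k$, and using the Cauchy-Schwarz estimate $|q_k^{\top}(G-\widehat G)q_k| \leq \|(G-\widehat G)q_k\|$ to absorb part of the cross term with the correction $(\widehat\lambda_k - \lambda_k)q_k$.
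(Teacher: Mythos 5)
Your route is the same as the paper's: upgrade Proposition~\ref{prop1.22q} to $\lVert G-\widehat G\rVert_{\infty}\leq B(\lambda_1)$ (monotonicity of $t\mapsto\max\{t,\sigma\}B_*(\min\{t,s_4^2\})$, an additive $\sigma$ to remove the thresholds, symmetry of $G-\widehat G$), then evaluate at $\theta=q_k$. That part, and your proof of \eqref{lem1.29eq2} via $(G-\widehat\lambda_k I)q_k=(G-\widehat G)q_k$, is complete and correct, and coincides with what the paper does. For \eqref{lem1.29eq1} the paper also uses exactly your decomposition $(G-\lambda_k I)q_k=(G-\widehat G)q_k+(\widehat\lambda_k-\lambda_k)q_k$ together with Corollary~\ref{cor.eigen}.

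The genuine gap is the sharpening you postpone: it does not work, and the constant $2$ in \eqref{lem1.29eq1} cannot be extracted from the two scalar bounds you are using. With $u=(G-\widehat G)q_k$ and $c=\widehat\lambda_k-\lambda_k$ one has
\[
\lVert u+c\,q_k\rVert^2=\lVert u\rVert^2+c^2+2c\,\langle u,q_k\rangle,
\qquad \langle u,q_k\rangle=q_k^{\top}Gq_k-\widehat\lambda_k,
\]
and the cross term has no favorable sign, so Cauchy--Schwarz only gives $2B(\lambda_1)^2+2B(\lambda_1)\lvert\langle u,q_k\rangle\rvert\leq 4B(\lambda_1)^2$. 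Moreover the factor $4$ is sharp for any argument that uses only $\lVert G-\widehat G\rVert_{\infty}\leq B$ and $\lvert\widehat\lambda_k-\lambda_k\rvert\leq B$: take $G=\mathrm{diag}(1,0)$, $\widehat G=\tfrac{1}{2}I$ and label the eigenvectors of $\widehat G$ so that $q_1=p_2$; then both hypotheses hold with $B=\tfrac{1}{2}$, while $\sum_i(\lambda_i-\lambda_1)^2\langle q_1,p_i\rangle^2=1=4B^2$. You should know that the paper's own proof is precisely the triangle-inequality step you call naive: it bounds $\bigl(\sum_i(\lambda_i-\lambda_k)^2\langle q_k,p_i\rangle^2\bigr)^{1/2}\leq\lVert(G-\widehat G)q_k\rVert+\lvert\widehat\lambda_k-\lambda_k\rvert\leq 2B(\lambda_1)$ and then records the square as $2B(\lambda_1)^2$, so the stated constant is not actually justified there either; what both arguments prove is \eqref{lem1.29eq1} with $4B(\lambda_1)^2$ (recovering $2$ would require exploiting the direction-dependent bound of Proposition~\ref{prop1.22q} at $\theta=q_k$, which neither you nor the paper does). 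So finish your proof with the triangle inequality and the constant $4$; the discrepancy only propagates as absolute constants into Propositions~\ref{p51pca}, \ref{supnorm} and \ref{Fnorm}.
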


\begin{proof} We observe that, 
\begin{align*}
\lVert G - \widehat{G} \rVert_{\infty} & = \max \Bigl\{ 
\sup_{\theta \in \mathbb{S}_d} 
\theta^{\top} \bigl( G - \widehat{G} \bigr) \theta , \ 
\sup_{\theta \in \mathbb{S}_d} 
\theta^{\top} \bigl( \widehat{G} - G \bigr) \theta 
\Bigr\}  
\\ & = \sup_{\theta \in \mathbb{S}_d} \; \bigl\lvert \theta^{\top} G \theta - \theta^{\top} \widehat G \theta 
\bigr\rvert\\
& \leq \sup_{\theta \in \mathbb{S}_d} \; \bigl\lvert \max\{ \theta^{\top} G \theta, \sigma\} - \max\{ \theta^{\top} \widehat G \theta , \sigma\}
\bigr\rvert + \sigma
\end{align*}
for any threshold $\sigma>0$. 
Thus, by Proposition ~\ref{prop1.22q}, with probability at least $1-2\epsilon,$
\begin{align*} 
\sup_{\theta \in \mathbb{S}_d} \| G\theta - \widehat G \theta \|  \leq  B\left(\lambda_1 \right).
\end{align*}
To prove equation ~\eqref{lem1.29eq2} it is sufficient to observe that, since 
\[
\| G\theta - \widehat G \theta \| = \| \sum_{i,j=1}^d(\lambda_i-\widehat \lambda_j) \langle \theta, q_j \rangle\langle p_i, q_j \rangle p_i\|
\] 
choosing $\theta = q_k,$ with $k \in \{1,\dots, d\}$,
 \[
\| Gq_k - \widehat G q_k \|^2 = \sum_{i=1}^d(\lambda_i-\widehat \lambda_k)^2 \langle  q_k, p_i \rangle^2 .
\] 

\noindent
On the other hand, to prove equation ~\eqref{lem1.29eq1}, we observe that 
$$\aligned\| G\theta - \widehat G \theta \| & = \| \sum_{i=1}^d  \lambda_i \langle \theta, p_i\rangle p_i -  \sum_{i=1}^d \widehat   \lambda_i \langle \theta, q_i\rangle q_i \|\\
& = \| \sum_{i=1}^d   \lambda_i \left(\langle \theta, p_i\rangle p_i -\langle \theta, q_i\rangle q_i \right)-  \sum_{i=1}^d \left( \widehat   \lambda_i -   \lambda_i \right)\langle \theta, q_i\rangle q_i \|\\
& \geq  \| \sum_{i=1}^d   \lambda_i\left(\langle \theta, p_i\rangle p_i -\langle \theta, q_i\rangle q_i \right)\|- \| \sum_{i=1}^d \left( \widehat   \lambda_i -   \lambda_i \right)\langle \theta, q_i\rangle q_i \|
\endaligned$$

\noindent
where, by Corollary ~\ref{cor.eigen},

\begin{align*} 
\| \sum_{i=1}^d \left( \widehat   \lambda_i -  \lambda_i \right)\langle \theta, q_i\rangle q_i \|^2 & = \sum_{i=1}^d \left( \widehat   \lambda_i -   \lambda_i \right)^2\langle \theta, q_i\rangle^2 \leq B\left(\lambda_1 \right)^2.
\end{align*}

\noindent
Choosing again $\theta=  q_k,$ for $k \in \{1,\dots, d\},$ we get

\begin{align*}
\| \sum_{i=1}^d  \lambda_i\left(\langle q_k, p_i\rangle p_i -\langle q_k, q_i\rangle q_i \right)\|^2 & = \| \sum_{i,j=1}^d \left( \lambda_i-\lambda_j \right) \langle q_k, q_j\rangle \langle q_j, p_i \rangle  p_i\|^2\\
& =  \| \sum_{i=1}^d \left( \lambda_i-\lambda_k \right) \langle q_k, p_i \rangle p_i\|^2\\
& =  \sum_{i=1}^d \left( \lambda_i-\lambda_k \right)^2 \langle q_k, p_i \rangle^2,
\end{align*}
which concludes the proof.
\end{proof}

\subsection{Proof of Proposition~\ref{p51pca}}\label{pf1}

Since $\Pi_r$ and $\widehat \Pi_r$ have the same rank, we can write

\[
\|\Pi_r-\widehat \Pi_r\|_{\infty}=\sup_{\substack{\theta \in \mathbb{S}_d \\ \theta \in \mathbf{Im}(\widehat \Pi_r)}} \| \Pi_r\theta-\widehat \Pi_r\theta\|
\]
as shown in Lemma ~\ref{imQ} in Appendix ~\ref{appx}.
Moreover, for any $\theta\in \mathbf{Im}(\widehat \Pi_r)\cap \mathbb{S}_d,$  we observe that
$$\aligned  \| \Pi_r\theta-\widehat \Pi_r\theta\|^2 & = \| \Pi_r\theta-\theta\|^2 \\
& =\| \sum_{i=1}^r \langle \theta, p_i\rangle p_i - \sum_{i=1}^d \langle \theta, p_i\rangle p_i \|^2\\
& = \sum_{i=r+1}^d \langle \theta, p_i\rangle^2.
\endaligned$$

\noindent
Since any $\theta \in \mathbf{Im}(\widehat \Pi_r)$ can be written as $\theta= \sum_{k=1}^r \langle \theta, q_k\rangle q_k$ with $\sum_{k=1}^r \langle \theta, q_k\rangle^2=1,$ then
$$\aligned  \| \Pi_r\theta-\widehat \Pi_r\theta\|^2 &= \sum_{i=r+1}^d \left( \sum_{k=1}^r \langle \theta, q_k \rangle \langle q_k,p_i\rangle \right)^2.
\endaligned$$

\noindent
Hence, by the Cauchy-Schwarz inequality, we get
\begin{align} 
\| \Pi_r\theta-\widehat \Pi_r\theta\|^2& \leq  \sum_{i=r+1}^d \left( \sum_{k=1}^r \langle \theta, q_k \rangle^2\right)  \left( \sum_{k=1}^r \langle q_k,p_i\rangle^2\right)  \\
 \label{PtQt}
& = \sum_{k=1}^r \sum_{i=r+1}^d \langle q_k,p_i\rangle^2.
\end{align}

\vskip2mm
\noindent
Moreover, for any $k \in  \{1,\dots, r\},$ we have
$$\aligned  
\sum_{i=r+1}^d \left(  \lambda_{k}-\lambda_{i} \right)^2 \langle q_k, p_i \rangle^2 & \geq  \sum_{i=r+1}^d \left( \lambda_{r}-\lambda_{i} \right)^2 \langle q_k, p_i \rangle^2\\
& \geq \left( \lambda_{r}-\lambda_{r+1} \right)^2 \sum_{i=r+1}^d  \langle q_k, p_i \rangle^2 .
\endaligned$$

\noindent
Then, by Lemma~\ref{lem1.29}, with probability at least $1-2\epsilon,$
$$\left(\lambda_{r}-\lambda_{r+1} \right)^2 \sum_{i=r+1}^d  \langle q_k, p_i \rangle^2 \leq 2B\left(\lambda_1 \right)^2.$$

\noindent
Applying the above inequality to equation ~\eqref{PtQt}
we conclude the proof.

\subsection{Proof of Proposition~\ref{fbn}}\label{pf2}

Since $\{p_i\}_{i=1}^d$ is an orthonormal basis of eigenvectors of $M$ and $\{\mu_i\}_{i=1}^d$ the corresponding eigenvalues, we can write $M$ as
$$M= \sum_{i=1}^d \mu_i p_i p_i^{\top}.$$ 
Similarly, $$M' = \sum_{i=1}^d \mu'_i q_i q_i^{\top}.$$
Our goal is to evaluate $\| M - M'\|_F$ where, by definition,
$$\aligned  M- M' & =  \sum_{i=1}^d \mu_i p_i p_i^{\top} - \sum_{k=1}^d \mu'_k q_k q_k^{\top}\\
& = \sum_{i,k=1}^d (  \mu_i -  \mu'_k) \langle p_i, q_k\rangle q_k p_i^{\top}.
\endaligned$$

\noindent
We observe that $M-M'$ is a symmetric matrix and its Frobenius norm is
$$\| M-M'\|_F^2 = \mathbf{Tr}((M-M')^{\top} (M-M')),$$

\noindent
where
$$\aligned (M-M')^{\top} (M-M')& = \sum_{i,j,k=1}^d (  \mu_i -  \mu'_k)(  \mu_i -  \mu'_j) \langle p_i, q_k\rangle  \langle p_i, q_j\rangle q_k q_j^{\top}	.
\endaligned$$

\noindent
Considering that $\mathbf{Tr}(q_k q_j^{\top})= \delta_{jk},$ we conclude that
$$\aligned \| M-M'\|_F^2 & =\sum_{i,j,k=1}^d (  \mu_i -  \mu'_k)(  \mu_i -  \mu'_j) \langle p_i, q_k\rangle  \langle p_i, q_j\rangle\delta_{jk}\\
& = \sum_{i,k=1}^d (  \mu_i -  \mu'_k)^2 \langle p_i, q_k\rangle^2.
\endaligned$$

\vskip1mm
\noindent
To prove the second part of the result, it is sufficient to observe that using twice equation~\eqref{eq_fbn}. Indeed 
\begin{align*}
\| f(M)-f(M')\|_F^2 & = \sum_{i,k=1}^d ( f( \mu_i) - f( \mu'_k))^2 \langle p_i, q_k\rangle^2\\
& \leq  \frac{1}{L^2}  \sum_{i,k=1}^d (  \mu_i -  \mu'_k)^2 \langle p_i, q_k\rangle^2 = \frac{1}{L^2}\| M-M'\|_F^2. 
\end{align*}

\subsection{Proof of Proposition~\ref{supnorm}}\label{pf3}

In all this proof, we will assume that the event of probability at least $1-2\epsilon$ described in Proposition~\ref{prop1.23eig} holds true. Let  $H \in M_d(\mathbb R)$ be the matrix defined as
\[
H= \sum_{k=1}^d \lambda_k q_k q_k^{\top}.
\]
We observe that 
\[
\| f(G)- f(\widehat G) \|_{\infty} \leq \| f(G)- f(H) \|_{\infty} + \| f(H)- f(\widehat G) \|_{\infty}
\]

\noindent
and we look separately at the two terms. By definition of operator norm, 
we have 
$$ \aligned\| f(H)- f(\widehat G) \|_{\infty}^2 & = \sup_{\theta\in \mathbb S_d} \| f(H)\theta- f(\widehat G)\theta \|^2\\
& =  \sup_{\theta\in \mathbb S_d} \| \sum_{k=1}^d (f(\lambda_k) -f(\widehat \lambda_k)) \langle \theta, q_k \rangle q_k \|^2\\
& =  \sup_{\theta\in \mathbb S_d} \sum_{k=1}^d (f(\lambda_k) -f(\widehat \lambda_k))^2 \langle \theta, q_k \rangle^2.
\endaligned$$ 

\noindent
Since the function $f$ is $1/L$-Lipschitz, we get
$$ \| f(H)- f(\widehat G) \|_{\infty}^2  \leq L^{-2}  \sup_{\theta\in \mathbb S_d} \sum_{k=1}^d (\lambda_k -\widehat \lambda_k)^2 \langle \theta, q_k \rangle^2$$
\noindent
and then, applying Corollary ~\ref{cor.eigen}, with  probability at least $1-2\epsilon,$ we obtain 
\[
\| f(H)- f(\widehat G) \|_{\infty}^2  \leq L^{-2} B\left(\lambda_1 \right)^2.
\] 

\noindent
On the other hand, we have 
$$\aligned  \| f(G)- f(H) \|_{\infty} &  \leq  \| f(G)- f(H) \|_F\\
& \leq \frac{1}{L} \|G-H\|_F,
\endaligned$$
as shown in equation ~\eqref{rcorf}. Hence, according to Proposition~\ref{fbn}, we get
\[
\| f(G)- f(H) \|_{\infty}^2 \leq  \frac{1}{L^2} \sum_{i,k=1}^d (\lambda_i-\lambda_k)^2 \langle p_i, q_k\rangle^2
\]
where, for any $r,$
\begin{align*}&\sum_{i,k=1}^d (\lambda_i-\lambda_k)^2 \langle p_i, q_k\rangle^2 \leq \left( \sum_{i=1}^r \sum_{k=1}^d+ \sum_{i=1}^d \sum_{k=1}^r+ \sum_{i,k=r+1}^{d}\right) (\lambda_i-\lambda_k)^2 \langle p_i, q_k\rangle^2.
\end{align*}

\noindent
Since $\lambda_i \geq 0,$ for any $i\in \{1,\dots, d\},$ we get
$$\aligned \sum_{i,k=r+1}^{d} (\lambda_i-\lambda_k)^2 \langle p_i, q_k\rangle^2 
& \leq 2\sum_{i=r+1}^{d} \lambda_i^2.
\endaligned$$

\noindent
Moreover, by Lemma~\ref{lem1.29}, we have 
\[
 \sum_{k=1}^r\sum_{i=1}^d  (\lambda_i-\lambda_k)^2 \langle p_i, q_k\rangle^2
 \leq 2 r  B\left(\lambda_1 \right)^2
\]

\noindent
and since the same bound also holds for
\[
\sum_{i=1}^r \sum_{k=1}^d (\lambda_i-\lambda_k)^2 \langle p_i, q_k\rangle^2, 
\]
we conclude the proof.

\subsection{Proof of Proposition~\ref{Fnorm}}\label{pf4}

During the whole proof, we will 
assume that the event of probability at least $1-2\epsilon$ 
described in Proposition~\ref{prop1.23eig} is realized. 
According to  Proposition~\ref{fbn}, we have 
\[ 
\| G - \widetilde G\|_F^2 = \sum_{i,k=1}^d (  \lambda_i -  \widetilde \lambda_k)^2 \langle p_i, q_k\rangle^2,
\]
where
\[
\sum_{i,k=1}^d (\lambda_i - \widetilde \lambda_k)^2 \langle p_i, q_k\rangle^2 \leq  \left(\sum_{i=1}^{r}\sum_{k=1}^d+ \sum_{k=1}^{r}\sum_{i=1}^d+ \sum_{i,k=r+1}^{d}  \right)(\lambda_i - \widetilde \lambda_k)^2 \langle p_i, q_k\rangle^2.
\]

\noindent
Since, by definition, $\widetilde \lambda_i \leq  \lambda_i,$ it follows that
\begin{align*}
\sum_{i,k=r+1}^{d} (\lambda_i - \widetilde \lambda_k)^2 \langle p_i, q_k\rangle^2 & \leq \sum_{i,k=r+1}^{d} (\lambda_i^2 + \widetilde \lambda_k^2) \langle p_i, q_k\rangle^2 \\
& \leq 2 \sum_{i=r+1}^{d}\lambda_i^2.
\end{align*}
Furthermore, we observe that 

\[
\sum_{k=1}^{r}\sum_{i=1}^d (\lambda_i - \widetilde \lambda_k)^2 \langle p_i, q_k\rangle^2 
\leq 2 \sum_{k=1}^{r}\sum_{i=1}^d (\lambda_i - \widehat \lambda_k)^2 \langle p_i, q_k\rangle^2  +2 \sum_{k=1}^{r}\sum_{i=1}^d  B(\widehat \lambda_k)^2  \langle p_i, q_k\rangle^2 ,
\]

\noindent
where, by Lemma~\ref{lem1.29}, 
\[
\sum_{k=1}^{r}\sum_{i=1}^d (\lambda_i - \widehat \lambda_k)^2 \langle p_i, q_k\rangle^2 \leq r B(\lambda_1)^2.
\]
and $B(\widehat \lambda_k)  \leq B(\widehat \lambda_1).$
We have then proved that 
\[
\sum_{k=1}^{r}\sum_{i=1}^d (\lambda_i - \widetilde \lambda_k)^2 \langle p_i, q_k\rangle^2 
\leq 2 r B(\lambda_1)^2+ 2r B(\widehat \lambda_1)^2.
\]
Applying Corollary~\ref{cor.eigen} and using the fact that $B(t + a) \leq B(t) + a/2$,
as explained after equation \eqref{eq.defB}, we deduce that 
\[
B(\widehat \lambda_1) \leq B\bigl[\lambda_1 + B(\lambda_1) \bigr] 
\leq 3 B(\lambda_1)/2.
\]
This proves that 
\[
\sum_{k=1}^{r}\sum_{i=1}^d (\lambda_i - \widetilde \lambda_k)^2 \langle p_i, q_k\rangle^2  
\leq 2 r \left[ B(\lambda_1)^2+ 9B( \lambda_1)^2/4 \right] = 13 r B( \lambda_1)^2/2.
\]
Considering that the same bound holds for
\[
\sum_{i=1}^{r}\sum_{k=1}^d(\lambda_i - \widetilde \lambda_k)^2 \langle p_i, q_k\rangle^2 ,
\]
we conclude the proof.

\newpage
\appendix

\section{Orthogonal Projectors}\label{appx}
In this appendix we introduce some results on orthogonal projectors. \\[1mm]
Let $P,\ Q: \mathbb R^d \to \mathbb R^d$ be two orthogonal projectors. 
We denote by $\mathbb S_d$ the unit sphere of $\mathbb R^d.$
By definition, 
$$\|P-Q\|_{\infty}=\sup_{x \in \mathbb S_d } \|Px-Qx\|$$
where, without loss of generality, we can take the supremum over the normalized eigenvectors of $P-Q.$ 

\vskip 2mm
\noindent
A good way to describe the geometry of $P-Q$ is to consider the 
eigenvectors of $P+Q$. 
\begin{lem}\label{lemma1} Let $x\in \mathbb S_d$ be an eigenvector of $P+Q$ with eigenvalue $\lambda.$ 
\begin{enumerate}
\item \label{eigen:1} In the case when $\lambda = 0$, then $Px = Qx = 0$, 
so that $x \in \mathbf{ker}(P) \cap \mathbf{ker}(Q)$; 
\item \label{eigen:2} in the case when $\lambda = 1$, then $PQx = QPx = 0$, 
so that 
\[
x \in \mathbf{ker}(P) \cap \mathbf{Im}(Q)  \oplus \mathbf{Im}(P) \cap \mathbf{ker}(Q); 
\]
\item \label{eigen:3} in the case when $\lambda = 2$, then $x = Px = Qx$, 
so that $x \in \mathbf{Im}(P) \cap \mathbf{Im}(Q)$; 
\item \label{eigen:4} otherwise $\lambda \in ]0,1[ \cup ]1,2[$, 
\[ 
(P-Q)^2 x = (2 - \lambda)\lambda x \neq 0, 
\] 
so that $(P-Q)x \neq 0$. Moreover 
\[ (P+Q)(P-Q) x = (2-\lambda)(P-Q)x,\]
so that $(P-Q) x$ is an eigenvector of $P+Q$ with eigenvalue $2-\lambda$.
Moreover
\[
0 < \lVert Px \rVert = \lVert Qx \rVert < \lVert 
x \rVert,
\] 
$x - Px \neq 0$, and $\bigl( Px, x - Px \bigr)$ is an orthogonal 
basis of $\mathbf{span} \bigl\{ x, (P-Q)x \bigr\}$.
\end{enumerate}
\end{lem}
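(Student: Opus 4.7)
The plan is to derive two algebraic identities for $P$ and $Q$ and then read off each case from them. The first is the quadratic relation $(P-Q)^2 + (P+Q)^2 = 2(P+Q)$, which follows from $P^2=P$ and $Q^2=Q$ by direct expansion; applied to the eigenvector $x$, it yields $(P-Q)^2 x = \lambda(2-\lambda)x$. The second comes from applying $P$ to $(P+Q)x = \lambda x$: it gives $PQx = (\lambda-1)Px$, and symmetrically $QPx = (\lambda-1)Qx$. Pairing $(P+Q)x = \lambda x$ with $x$ also yields the scalar identity $\lVert Px \rVert^2 + \lVert Qx \rVert^2 = \lambda$, which supplies the norm information used in every case.

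Cases \ref{eigen:1}, \ref{eigen:2} and \ref{eigen:3} are then immediate. For $\lambda = 0$, the norm identity forces $Px = Qx = 0$. For $\lambda = 1$, the second identity collapses to $PQx = 0$ and $QPx = 0$; writing $x = Px + Qx$ then exhibits $Px \in \mathbf{Im}(P) \cap \mathbf{ker}(Q)$ and $Qx \in \mathbf{Im}(Q) \cap \mathbf{ker}(P)$, giving the direct-sum decomposition. For $\lambda = 2$, the norm identity together with the contractions $\lVert Px \rVert \leq 1$ and $\lVert Qx \rVert \leq 1$ forces $\lVert Px \rVert = \lVert Qx \rVert = 1$; since $P$ is an orthogonal projector, $\lVert x - Px \rVert^2 = \lVert x \rVert^2 - \lVert Px \rVert^2 = 0$, so $Px = x$, and similarly $Qx = x$.

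The substance of the proof is case \ref{eigen:4}. The identity $(P-Q)^2 x = \lambda(2-\lambda)x$ is non-zero when $\lambda \in\, ]0,1[\,\cup\,]1,2[$, so $(P-Q)x \neq 0$. Adding the expansions $(P+Q)(P-Q) = P-Q+QP-PQ$ and $(P-Q)(P+Q) = P-Q+PQ-QP$ produces $(P+Q)(P-Q)+(P-Q)(P+Q) = 2(P-Q)$; applying this to $x$ yields $(P+Q)(P-Q)x = (2-\lambda)(P-Q)x$, so $(P-Q)x$ is an eigenvector of $P+Q$ with eigenvalue $2-\lambda$. For the norm equality, taking inner products of $PQx = (\lambda-1)Px$ and $QPx = (\lambda-1)Qx$ with $x$ both produce $\langle Px, Qx\rangle$, so $(\lambda-1)\lVert Px \rVert^2 = (\lambda-1)\lVert Qx \rVert^2$, and since $\lambda \neq 1$ we may cancel to get $\lVert Px \rVert = \lVert Qx \rVert$. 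Combined with $\lVert Px \rVert^2 + \lVert Qx \rVert^2 = \lambda$ this gives $\lVert Px \rVert^2 = \lambda/2 \in\, ]0,1[$, whence $0 < \lVert Px \rVert = \lVert Qx \rVert < \lVert x \rVert$ and in particular $x - Px \neq 0$. Orthogonality $\langle Px, x - Px\rangle = \lVert Px \rVert^2 - \lVert Px \rVert^2 = 0$ is then immediate. Finally, the relation $(P-Q)x = Px - Qx = 2Px - \lambda x$ (using $Qx = \lambda x - Px$) allows one to invert and write $Px = \tfrac{1}{2}\bigl((P-Q)x + \lambda x\bigr)$ and $x - Px = \tfrac{1}{2}\bigl((2-\lambda)x - (P-Q)x\bigr)$, so $\mathbf{span}\{Px, x - Px\} = \mathbf{span}\{x, (P-Q)x\}$; this span is two-dimensional because $x$ and $(P-Q)x$ are eigenvectors of $P+Q$ for the distinct eigenvalues $\lambda$ and $2-\lambda$.

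The only mildly subtle step is extracting $\lVert Px \rVert = \lVert Qx \rVert$ in case \ref{eigen:4} via the identity $PQx = (\lambda-1)Px$; once the two operator identities at the start are in hand, the rest is bookkeeping, so the obstacle is organizational rather than technical.
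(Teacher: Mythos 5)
Your proof is correct and follows essentially the same route as the paper: the same case-by-case analysis, driven by the identities $PQx=(\lambda-1)Px$, $QPx=(\lambda-1)Qx$, the relation $(P-Q)^2x=\lambda(2-\lambda)x$, and the norm identity $\lVert Px\rVert^2+\lVert Qx\rVert^2=\lambda\lVert x\rVert^2$, with the span argument for $(Px,\,x-Px)$ carried out exactly as in the paper. The only minor variation is in deriving $\lVert Px\rVert=\lVert Qx\rVert$ in case \ref{eigen:4}: you cancel the factor $\lambda-1\neq 0$ after pairing the two identities with $x$, whereas the paper deduces it from $\langle x,(P-Q)x\rangle=0$, i.e.\ from the orthogonality of the eigenvectors $x$ and $(P-Q)x$ of $P+Q$ associated with the distinct eigenvalues $\lambda$ and $2-\lambda$; both arguments are equally short and valid.
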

\begin{proof}
The operator $P+Q$ is symmetric, positive semi-definite, and $\lVert P + Q 
\rVert \leq 2$, so that there is a basis of eigenvectors and 
all eigenvalues are in the intervall $[0,2]$. \\[1mm]
In case \ref{eigen:1}, 
$ 0 = \langle Px + Qx, x \rangle = \lVert Px \rVert^2 + \lVert Q x \rVert^2$, 
so that $Px = Qx = 0$. \\[1mm]
In case \ref{eigen:2}, $PQx = P(x - Px) = 0$ and similarly 
$QPx = Q(x - Qx) = 0$. \\[1mm]
In case \ref{eigen:3}, 
\[
\lVert Px \rVert^2 + \lVert Qx \rVert^2 = \langle (P+Q)x, x \rangle = 
2 \langle x , x \rangle = \lVert Px \rVert^2 + \lVert x - Px \rVert^2  
+ \lVert Qx \rVert^2 + \lVert x - Qx \rVert^2 , 
\]
so that $\lVert x - Px \rVert = \lVert x - Qx \rVert = 0$. \\[1mm]
In case \ref{eigen:4}, remark that 
\[
PQx = P(\lambda x - Px) = (\lambda - 1) Px
\] 
and similarly $QPx = Q ( \lambda x - Q x) = (\lambda - 1)Q x$. 
Consequently 
\[
(P-Q)(P-Q) x = (P - QP - PQ + Q) x = (2 - \lambda) (P+Q) x = (2 - \lambda)\lambda x \neq 0, 
\]
so that $(P-Q)x \neq 0$. Moreover
\[
(P+Q)(P-Q)x = (P - PQ + QP - Q)x = (2 - \lambda)(P-Q)x. 
\]
Therefore $(P-Q)x$ is an eigenvector of $P+Q$ with eigenvalue $2 - \lambda 
\neq \lambda$, so that $\langle x , (P - Q) x \rangle = 0$, since $P + Q$ 
is symmetric. As $\langle x , (P - Q) x \rangle = \lVert Px \rVert^2 
- \lVert Qx \rVert^2$, this proves that $\lVert Px \rVert = \lVert 
Qx \rVert$. Since $(P+Q)x = \lambda x \neq 0$, necessarily $\lVert Px \rVert 
= \lVert Qx \rVert > 0$. Observe now that 
\[ 
\lVert Px \rVert^2 = \frac{1}{2} \bigl( 
\lVert Px \rVert^2 + \lVert Q x \rVert^2 \bigr) 
= \frac{1}{2} \langle x , (P+Q)x \rangle = 
\frac{\lambda}{2} \lVert x \rVert^2 < \lVert x \rVert^2. 
\]     
Therefore $\lVert x - Px \rVert^2 = \lVert x \rVert^2 - \lVert P x \rVert^2
> 0$, proving that $x - Px \neq 0$. Similarly, since $P$ and $Q$ play 
symmetric roles, $\lVert Q x \rVert < \lVert x \rVert$ and 
$x - Qx \neq 0$. \\[1mm]
As $P$ is an orthogonal projector, $(Px, x-Px)$ is an orthogonal pair 
of non-zero vectors. Moreover 
\[
x = x - Px + Px \in \mathbf{span} \{ Px, x - Px \}
\]
and 
\[
(P-Q)x = 2 Px - \lambda x = (2 - \lambda) Px - \lambda(x - Px) 
\in \mathbf{span} \{ Px, x - Px \}\]
therefore, $( Px, x - Px )$ is an 
orthogonal basis of $\mathbf{span}\{x, (P-Q)x \}$.
\end{proof}
\begin{lem}
\label{lemma2}
There is an orthonormal basis $(x_i)_{i= 1}^d$ of eigenvectors 
of $P+Q$ with corresponding eigenvalues 
$\{\lambda_i, \ i=1, \dots d \}$ and indices $2 m \leq  p \leq  q \leq s$, such that 
\begin{enumerate}
\item $\lambda_{i} \in ]1,2[$, if $1 \leq i \leq m$,
\item $\lambda_{m+i} = 2 - \lambda_i$, if $1 \leq i \leq m$, 
and $x_{m+i} = \lVert (P-Q) x_{i} 
\rVert^{-1} (P-Q) x_{i}$, 
\item $x_{2m+1}, \dots, x_{ p} \in \bigl(\mathbf{Im} (P) \cap \mathbf{ker} (Q)\bigr)$, 
and $\lambda_{2m+1} = \dots = \lambda_{p} = 1$, 
\item $x_{p+1}, \dots, x_{q} \in \bigl( \mathbf{Im} (Q) \cap \mathbf{ker} (P) 
\bigr)$, and $\lambda_{p + 1} = \dots =  \lambda_{q} = 1$, 
\item $x_{q + 1}, \dots, x_{s} \in \mathbf{Im}(P) \cap \mathbf{Im}(Q)$, 
and $\lambda_{q+1} = \dots =  \lambda_{s} = 2$, 
\item $x_{s+1}, \dots, x_d \in \mathbf{ker}(P) \cap \mathbf{ker}(Q)$, 
and $\lambda_{s+1} = \dots = \lambda_d = 0$.
\end{enumerate}
\end{lem}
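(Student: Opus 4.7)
My plan is to apply the spectral theorem to the symmetric positive semi-definite operator $P+Q$, whose spectrum lies in $[0,2]$ since $\|P+Q\|_\infty \leq 2$, and then refine the resulting orthonormal eigenbasis using the case analysis in Lemma~\ref{lemma1}. I would partition the spectrum into the five disjoint pieces $(1,2)$, $(0,1)$, $\{1\}$, $\{2\}$ and $\{0\}$: the first two pieces will be coupled to produce blocks $1$ and $2$ of the lemma, the singleton $\{1\}$ will split into blocks $3$ and $4$, and the singletons $\{2\}$ and $\{0\}$ will give blocks $5$ and $6$.

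The main work lies in the coupling of $(1,2)$-eigenvectors and $(0,1)$-eigenvectors. By item~\ref{eigen:4} of Lemma~\ref{lemma1}, for any $x \in E_\lambda$ with $\lambda \in (0,1) \cup (1,2)$, the vector $(P-Q)x$ belongs to $E_{2-\lambda}$ and satisfies $\|(P-Q)x\|^2 = \lambda(2-\lambda)\|x\|^2$. By polarization the operator $T_\lambda = \bigl(\lambda(2-\lambda)\bigr)^{-1/2}(P-Q)$, restricted to $E_\lambda$, is therefore a linear isometry into $E_{2-\lambda}$; and since $(P-Q)^2$ acts on $E_\lambda$ as the scalar $\lambda(2-\lambda)$, the composition $T_{2-\lambda} \circ T_\lambda$ is the identity on $E_\lambda$. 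Hence $T_\lambda$ is an isometric bijection $E_\lambda \to E_{2-\lambda}$ and, in particular, $\dim E_\lambda = \dim E_{2-\lambda}$. I would then pick any orthonormal basis $x_1, \ldots, x_m$ of the direct sum of all eigenspaces $E_\lambda$ with $\lambda \in (1,2)$, ordered so that the eigenvalues $\lambda_i$ are grouped, and set $x_{m+i} = \|(P-Q)x_i\|^{-1}(P-Q)x_i = T_{\lambda_i}(x_i)$; items $1$ and $2$ of the lemma then follow directly from the isometry property.

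For the three remaining blocks I would use items~\ref{eigen:1}--\ref{eigen:3} of Lemma~\ref{lemma1}. Item~\ref{eigen:3} identifies $E_2$ with $\mathrm{Im}(P) \cap \mathrm{Im}(Q)$ and item~\ref{eigen:1} identifies $E_0$ with $\mathbf{ker}(P) \cap \mathbf{ker}(Q)$, so arbitrary orthonormal bases of these subspaces supply $x_{q+1}, \ldots, x_s$ and $x_{s+1}, \ldots, x_d$. For $E_1$, item~\ref{eigen:2} yields the inclusion $E_1 \subseteq \bigl(\mathrm{Im}(P) \cap \mathbf{ker}(Q)\bigr) \oplus \bigl(\mathbf{ker}(P) \cap \mathrm{Im}(Q)\bigr)$, while the reverse inclusion is immediate because $(P+Q)x = x$ on either summand; the two summands are mutually orthogonal since $\mathbf{ker}(P) \perp \mathrm{Im}(P)$, so taking orthonormal bases of each in the prescribed order produces $x_{2m+1}, \ldots, x_p$ and $x_{p+1}, \ldots, x_q$. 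The one step that requires a moment's care is the orthonormality of the full paired block $x_1, \ldots, x_{2m}$: across distinct eigenvalues of $P+Q$ everything is orthogonal by the spectral theorem, and within a repeated eigenvalue $\lambda_i = \lambda_j$ in the lower half, orthonormality reduces via the isometry $T_{\lambda_i}$ to the chosen orthonormality of the $x_i$'s in the upper half. The index count $2m \leq p \leq q \leq s$ is then simply the cumulative dimension bookkeeping of the five blocks.
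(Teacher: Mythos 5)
Your proof is correct and follows essentially the same route as the paper: both decompose $\mathbb{R}^d$ into the eigenspaces of $P+Q$, identify the eigenspaces for the eigenvalues $0$, $1$, $2$ via Lemma~\ref{lemma1}, and pair $E_{\lambda}$ with $E_{2-\lambda}$ through $P-Q$. The only cosmetic difference is that you show $(P-Q)$ carries an orthonormal basis of $E_{\lambda}$ to one of $E_{2-\lambda}$ via the isometry and the identity $(P-Q)^2 = \lambda(2-\lambda)\,\mathrm{Id}$ on $E_{\lambda}$, whereas the paper checks orthogonality of the images directly and obtains the spanning property by a short contradiction argument.
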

\begin{proof}
There exists a basis of eigenvectors of $P+Q$ (as already explained at 
the beginning of proof of Lemma ~\ref{lemma1}). From the previous lemma, 
we learn that all eigenvectors in the kernel of $P+Q$ are in 
$\mathbf{ker}(P) \cap \mathbf{ker}(Q)$, as on the other hand obviously 
$\mathbf{ker}(P) \cap \mathbf{ker}(Q) \subset \mathbf{ker}(P+Q)$ we get that 
\[
\mathbf{ker}(P+Q) = \mathbf{ker}(P) \cap \mathbf{ker}(Q).
\] 
In the same way 
the previous lemma proves that the eigenspace corresponding 
to the eigenvalue $2$ is equal to $\mathbf{Im}(P) \cap \mathbf{Im}(Q)$. 
It also proves that the eigenspace correponding to the 
eigenvalue $1$ is included in and consequently is equal 
to $\bigl( \mathbf{Im}(P) \cap \mathbf{ker}(Q) \bigr)  \oplus 
\bigl( \mathbf{ker}(P) \cap \mathbf{Im}(Q) \bigr)$, so that we can form an 
orthonormal basis of this eigenspace by taking the union 
of an orthonormal basis of $\mathbf{Im}(P) \cap \mathbf{ker}(Q)$ and 
an orthonormal basis of $\mathbf{ker}(P) \cap \mathbf{Im}(Q)$. \\[1mm]
Consider now an eigenspace corresponding to an eigenvalue 
$\lambda \in ]0,1[ \cup ]1,2[$ and let $x, y$ be two orthonormal eigenvectors  
in this eigenspace. Remark that (still from the previous lemma) 
\[ 
\langle (P-Q) x, (P-Q) y \rangle = \langle (P-Q)^2 x, y \rangle = 
(2 - \lambda) \lambda \langle x, y \rangle = 0.
\]  
Therefore, if $x_1, \dots, x_k$ is an orthonormal basis of the eigenspace $V_{\lambda}$ corresponding to the eigenvalue $\lambda$, 
then $(P-Q)x_1, \dots, (P-Q)x_k$ is an orthogonal system in $V_{2 - \lambda}$. 
If this system was not spanning $V_{2 - \lambda}$, we could add to 
it an orthogonal unit vector $y_{k+1} \in V_{2 - \lambda}$
so that $x_1, \dots, x_k, (P-Q) y_{k+1}$ would be an orthogonal 
set of non-zero vectors in $V_{\lambda}$, which would contradict 
the fact that $x_1, \dots, x_k$ was supposed to be an orthonormal basis of $V_{\lambda}$. 
Therefore,  
\[ \Bigl( \lVert (P-Q) x_i \rVert^{-1} (P-Q) x_i, \, 1 \leq i \leq k \Bigr) 
\]
is an orthonormal basis of $V_{2 - \lambda}$. Doing this construction 
for all the eigenspaces $V_{\lambda}$ such that $\lambda \in ]0,1[$ 
achieves the construction of the orthonormal basis described in 
the lemma.  
\end{proof}
\begin{lem}
Consider the orthonormal basis of the previous lemma. 
The set of vectors 
\[ 
\bigl( Px_1, \dots, Px_m, x_{2m+1}, \dots, x_p, x_{q+1}, \dots, x_s \bigr)
\] 
is an orthogonal basis of $\mathbf{Im}(P)$. 
The set of vectors 
\[ 
\bigl( Qx_1, \dots, Qx_m, x_{p+1}, \dots, x_q, x_{q+1}, \dots, x_s \bigr)
\] 
is an orthogonal basis of $\mathbf{Im}(Q)$. 
\end{lem}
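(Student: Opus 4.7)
The plan is to verify three things for the family $\mathcal F_P = (Px_1,\dots,Px_m,x_{2m+1},\dots,x_p,x_{q+1},\dots,x_s)$: containment in $\mathbf{Im}(P)$, pairwise orthogonality with no zero vector, and spanning. The argument for $\mathbf{Im}(Q)$ is completely symmetric, obtained by exchanging $P$ and $Q$ throughout (which, per Lemma \ref{lemma2}, simply swaps the blocks $(x_{2m+1},\dots,x_p)$ and $(x_{p+1},\dots,x_q)$ and leaves the other blocks fixed).

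The crucial preliminary computation is the identity
\[
Px_i \;=\; \tfrac12\bigl((P+Q)x_i+(P-Q)x_i\bigr)\;=\;\tfrac{\lambda_i}{2}\,x_i+\tfrac{\sqrt{(2-\lambda_i)\lambda_i}}{2}\,x_{m+i}, \qquad 1\le i\le m,
\]
which follows from Lemma \ref{lemma2} (definition of $x_{m+i}$) and from the formula $\lVert(P-Q)x_i\rVert^2=(2-\lambda_i)\lambda_i$ established in the proof of Lemma \ref{lemma1}, case \ref{eigen:4}. This immediately shows that $Px_i\in\mathbf{Im}(P)$, that $\lVert Px_i\rVert^2=\lambda_i/2>0$ (so $Px_i\neq 0$), and that $Px_i$ lives in the two-dimensional subspace $\mathbf{span}\{x_i,x_{m+i}\}$. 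The vectors $x_{2m+1},\dots,x_p$ and $x_{q+1},\dots,x_s$ belong to $\mathbf{Im}(P)$ by construction in Lemma \ref{lemma2}, so containment is immediate.

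For orthogonality, the displayed formula expresses each $Px_i$ ($i\le m$) as a combination of $x_i$ and $x_{m+i}$, and these indices are disjoint across $i=1,\dots,m$. Hence $\langle Px_i,Px_j\rangle=0$ for $i\neq j$ reduces to orthogonality of the basis $(x_k)$. For $j\in\{2m+1,\dots,p\}\cup\{q+1,\dots,s\}$, the vector $x_j$ is orthogonal to both $x_i$ and $x_{m+i}$, giving $\langle Px_i,x_j\rangle=0$. The remaining pairs, namely $x_j$ with $x_k$ for distinct indices in $\{2m+1,\dots,p\}\cup\{q+1,\dots,s\}$, are orthogonal as part of the orthonormal basis.

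For spanning, I would compute $Px_i$ for every basis vector and show it lies in $\mathbf{span}(\mathcal F_P)$: for $i\le m$ it is by definition in $\mathcal F_P$; for $i=m+j$, using $x_{m+j}=\lVert(P-Q)x_j\rVert^{-1}(P-Q)x_j$ together with $PQx_j=(\lambda_j-1)Px_j$ from Lemma \ref{lemma1} case \ref{eigen:4}, one obtains $Px_{m+j}=\sqrt{(2-\lambda_j)/\lambda_j}\,Px_j$, which is a scalar multiple of $Px_j$; for $i\in\{2m+1,\dots,p\}\cup\{q+1,\dots,s\}$ one has $Px_i=x_i$; and for $i\in\{p+1,\dots,q\}\cup\{s+1,\dots,d\}$, $x_i\in\mathbf{ker}(P)$ by Lemma \ref{lemma2}, so $Px_i=0$. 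Since $(x_i)_{i=1}^d$ spans $\mathbb R^d$, applying $P$ shows $\mathbf{Im}(P)\subseteq\mathbf{span}(\mathcal F_P)$, completing the argument. The main obstacle is simply the case bookkeeping across the six index blocks of Lemma \ref{lemma2}; no new analytic content is required beyond the identity $PQx=(\lambda-1)Px$ already proved.
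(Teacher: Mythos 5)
Your proof is correct, and its geometric core is the same as the paper's: everything rests on the fact that, for $1 \le i \le m$, $Px_i$ is a nonzero vector of $\mathbf{span}\{x_i, x_{m+i}\}$, while every other basis vector of Lemma~\ref{lemma2} lies in $\mathbf{Im}(P)$ or in $\mathbf{ker}(P)$ according to its block. The execution differs in how orthogonality and spanning are obtained. The paper uses the statement of Lemma~\ref{lemma1}, case~\ref{eigen:4}, that $\bigl(Px_i,\,x_i-Px_i\bigr)$ is an orthogonal basis of $\mathbf{span}\{x_i,(P-Q)x_i\}$, swaps these pairs into the basis of Lemma~\ref{lemma2} to obtain an orthogonal basis of $\mathbb{R}^d$ whose vectors each lie in $\mathbf{Im}(P)$ or $\mathbf{ker}(P)$, and reads off the two sub-bases; spanning is then automatic from the orthogonal decomposition $\mathbb{R}^d=\mathbf{Im}(P)\oplus\mathbf{ker}(P)$. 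You never introduce the complementary vectors $x_i-Px_i$: instead you expand $Px_i=\tfrac{\lambda_i}{2}x_i+\tfrac{\sqrt{(2-\lambda_i)\lambda_i}}{2}x_{m+i}$ explicitly, check pairwise orthogonality coordinate-wise, and prove spanning by applying $P$ to every basis vector, using $PQx_j=(\lambda_j-1)Px_j$ to get $Px_{m+j}=\sqrt{(2-\lambda_j)/\lambda_j}\,Px_j$, so that $\mathbf{Im}(P)=P(\mathbb{R}^d)\subseteq\mathbf{span}(\mathcal{F}_P)$. Your route is a bit longer and more computational but self-contained in its coefficients (it also gives $\lVert Px_i\rVert^2=\lambda_i/2>0$ explicitly, valid since $\lambda_i\in\,]1,2[$), whereas the paper's basis-exchange argument delivers orthogonality and spanning in one stroke. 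Both treat $Q$ by symmetry; note only that swapping $P$ and $Q$ replaces $(P-Q)x_i$ by its negative, so $x_{m+i}$ picks up a sign in your explicit formulas, which is harmless since only $\mathbf{span}\{x_i,x_{m+i}\}$ matters.
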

\begin{proof}
According to Lemma \ref{lemma1}, $(Px_i, x_i - Px_i)$ is an orthogonal basis 
of $\mathbf{span} \{ x_i, x_{m+i} \}$, 
so that 
\[
\bigl( Px_1, \dots, Px_m, x_1 - Px_1, \dots, x_m - Px_m, x_{2m+1}, \dots, x_d
\bigr)
\]
is another orthogonal basis of $\mathbb{R}^d$. 
Each vector of this basis is either in $\mathbf{Im}(P)$ or in $\mathbf{ker}(P)$ and
more precisely
\begin{align*}
Px_1, \dots, Px_m, x_{2m+1}, \dots, x_p, x_{q+1}, \dots, x_s & \in \mathbf{Im}(P),\\ 
x_1 - Px_1, \dots, x_m - Px_m, x_{p+1}, \dots, x_q, 
x_{s+1}, \dots, x_d & \in \mathbf{ker}(P).
\end{align*}
This proves the claim of the lemma concerning $P$. Since $P$ and $Q$ 
play symmetric roles, this proves also the claim concerning $Q$, 
{\em mutatis mutandis}. 
\end{proof}
\begin{lem}
\label{lemma4}
The projectors $P$ and $Q$ have the same rank if and only if 
\[p - 2m = q - p.\] 
\end{lem}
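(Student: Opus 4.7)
The plan is to simply count the cardinalities of the orthogonal bases of $\mathbf{Im}(P)$ and $\mathbf{Im}(Q)$ exhibited in the preceding lemma and solve for equality.

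First I would read off the rank of each projector from the orthogonal bases. The basis of $\mathbf{Im}(P)$ is
\[
\bigl( Px_1, \dots, Px_m, \; x_{2m+1}, \dots, x_p, \; x_{q+1}, \dots, x_s \bigr),
\]
consisting of $m$ vectors from the first block, $p - 2m$ vectors from the block indexed $2m+1, \dots, p$, and $s - q$ vectors from the block indexed $q+1, \dots, s$. Hence
\[
\mathrm{rank}(P) = m + (p - 2m) + (s - q) = p - m + s - q.
\]
Similarly, the basis of $\mathbf{Im}(Q)$ given in the previous lemma is
\[
\bigl( Qx_1, \dots, Qx_m, \; x_{p+1}, \dots, x_q, \; x_{q+1}, \dots, x_s \bigr),
\]
which has size $m + (q - p) + (s - q) = m + s - p$, so $\mathrm{rank}(Q) = m + s - p$.

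Then I would write out the equality $\mathrm{rank}(P) = \mathrm{rank}(Q)$ and simplify: it reads $p - m + s - q = m + s - p$, which rearranges to $2p - 2m = q$, i.e.\ $p - 2m = q - p$. Conversely, this relation immediately forces $\mathrm{rank}(P) = \mathrm{rank}(Q)$. There is really no obstacle here, since the hard geometric work was already done in Lemma~\ref{lemma1} and the basis lemma; the present statement is a bookkeeping corollary. The only thing to be careful about is to make sure the bases listed really are orthogonal bases (so that their cardinalities equal the ranks), but this was established in the previous lemma.
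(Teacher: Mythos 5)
Your proof is correct and is exactly the argument the paper leaves implicit: the lemma is stated without proof immediately after the basis lemma, precisely because the ranks can be read off as the cardinalities $m + (p-2m) + (s-q)$ and $m + (q-p) + (s-q)$ of the two orthogonal bases, and equating them gives $p - 2m = q - p$. Your care in noting that the listed vectors are genuinely nonzero and orthogonal (so cardinality equals rank) is the only point of substance, and it is indeed covered by Lemma~\ref{lemma1} and the basis lemma.
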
 

\begin{lem}
\label{imQ}
Assume that $\mathrm{rk}(P) = \mathrm{rk}(Q)$. Then 
\[ 
\lVert P - Q \rVert_{\infty} = \sup_{\theta \in \mathbf{Im}(Q) \cap \mathbb{S}_d}  
\lVert (P-Q) \theta \rVert.
\] 
\end{lem}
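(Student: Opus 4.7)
The plan is to prove the non-trivial direction
\[
\|P - Q\|_\infty \leq \sup_{\theta \in \mathbf{Im}(Q) \cap \mathbb{S}_d} \|(P-Q)\theta\|,
\]
the reverse direction being immediate since $\mathbf{Im}(Q) \cap \mathbb{S}_d \subset \mathbb{S}_d$. The strategy is to work in the orthonormal basis $(x_i)_{i=1}^d$ of Lemma \ref{lemma2} and match each contribution to $\|P-Q\|_\infty$ by a unit vector inside $\mathbf{Im}(Q)$.

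First, since $P-Q$ is symmetric, $\|P-Q\|_\infty$ equals its largest absolute eigenvalue. I would decompose $\mathbb{R}^d$ into the $P-Q$-invariant pieces: the two-dimensional subspaces $V_i = \mathbf{span}\{x_i, x_{m+i}\}$ for $i \leq m$, together with $\mathbf{Im}(P) \cap \mathbf{ker}(Q)$, $\mathbf{Im}(Q) \cap \mathbf{ker}(P)$, $\mathbf{Im}(P) \cap \mathbf{Im}(Q)$ and $\mathbf{ker}(P) \cap \mathbf{ker}(Q)$. Using Lemma \ref{lemma1}, on each $V_i$ the operator $P-Q$ has eigenvalues $\pm \mu_i$ with $\mu_i = \sqrt{(2-\lambda_i)\lambda_i}$; on $\mathbf{Im}(P) \cap \mathbf{ker}(Q)$ it acts as $+I$; on $\mathbf{Im}(Q) \cap \mathbf{ker}(P)$ as $-I$; and on the remaining two pieces as $0$. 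Hence $\|P-Q\|_\infty = \max\bigl(\max_i \mu_i, \, \mathbf{1}_{\{p > 2m\}}\bigr)$.

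Next I would exhibit, for each type of contribution, a realizing unit vector in $\mathbf{Im}(Q)$. For $\mu_i$: the subspace $V_i$ is $Q$-invariant because $Q = \tfrac{1}{2}\bigl((P+Q) - (P-Q)\bigr)$, and combining $(P+Q)x_i = \lambda_i x_i$ with $(P-Q)x_i = \mu_i x_{m+i}$ yields $Qx_i = \tfrac{\lambda_i}{2} x_i - \tfrac{\mu_i}{2} x_{m+i}$ with $\|Qx_i\|^2 = \lambda_i/2$. A short computation of $(P-Q)Qx_i$ then shows $\|(P-Q)(Qx_i/\|Qx_i\|)\| = \mu_i$, so the unit vector $Qx_i/\|Qx_i\| \in \mathbf{Im}(Q) \cap \mathbb{S}_d$ realizes $\mu_i$. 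For the eigenvalue $1$ contribution, when $p > 2m$ the rank hypothesis together with Lemma \ref{lemma4} forces $q - p = p - 2m > 0$, so $\mathbf{Im}(Q) \cap \mathbf{ker}(P) \neq \{0\}$, and any unit vector $\theta$ in this intersection satisfies $(P-Q)\theta = -\theta$, of norm $1$.

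The main obstacle is precisely the eigenvalue-$1$ case: this is where the rank hypothesis is essential, since without it one could have $\mathbf{Im}(P) \cap \mathbf{ker}(Q) \neq \{0\}$ (so that $+1$ is an eigenvalue of $P-Q$, but realized only by vectors orthogonal to $\mathbf{Im}(Q)$) while $\mathbf{Im}(Q) \cap \mathbf{ker}(P) = \{0\}$, and the sup over $\mathbf{Im}(Q) \cap \mathbb{S}_d$ would then strictly underestimate $\|P-Q\|_\infty$. Putting together the matches across all invariant pieces yields the desired inequality and concludes the proof.
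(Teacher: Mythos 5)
Your proposal is correct and follows essentially the same route as the paper: both work in the eigenbasis of Lemma \ref{lemma2}, realize the contribution $\sqrt{\lambda_i(2-\lambda_i)}$ by the unit vector $Qx_i/\lVert Qx_i\rVert \in \mathbf{Im}(Q)$, and invoke Lemma \ref{lemma4} with the rank hypothesis to produce a unit vector of $\mathbf{Im}(Q)\cap\mathbf{ker}(P)$ realizing the eigenvalue $1$. The only difference is cosmetic: you diagonalize $P-Q$ blockwise on its invariant subspaces, whereas the paper argues through the eigenvectors of $(P-Q)^2$.
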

\begin{proof}
As $P-Q$ is a symmetric operator, we have
\begin{multline*}
\sup_{\theta \in \mathbb{S}_d} \lVert (P-Q) \theta \rVert^2 = 
\sup \Bigl\{ \langle (P-Q)^2 \theta, \theta \rangle \ | \ \theta \in \mathbb{S}_d 
\Bigr\} \\ = \sup \Bigl\{ \langle (P-Q)^2 \theta, \theta \rangle \ |  \
\theta \in \mathbb{S}_d \text{ is an eigenvector of } (P-Q)^2 \Bigr\}. 
\end{multline*}
Remark that the basis described in Lemma \ref{lemma2} is also 
a basis of eigenvectors of $(P-Q)^2$. More precisely, 
according to Lemma \ref{lemma1}
\begin{align*}
(P-Q)^2 x_i & = \lambda_i (2 - \lambda_i) x_i, & 1 \leq i \leq m,\\ 
(P-Q)^2 x_{m+i} & = \lambda_i ( 2 - \lambda_i) x_{m+i}, & 1 \leq i \leq m,\\
(P-Q)^2 x_i & = x_i, & 2m < i \leq q, \\ 
(P-Q)^2 x_i & = 0, & q < i \leq d.
\end{align*}
If $q - 2m > 0$, then $\lVert P - Q \rVert_{\infty} = 1$, and  $q-p > 0$,
according to Lemma \ref{lemma4}, so that 
$\lVert (P-Q)x_{p+1} \rVert = 1$, where $x_{p+1} \in \mathbf{Im}(Q)$.  
If $q = 2m$ and $m > 0$, there is $i \in \{1, \dots,  m\}$ 
such that $\lVert P - Q \rVert_{\infty}^2  = \lambda_i(2 - \lambda_i)$. 
Since $x_i$ and $x_{m+i}$ are two eigenvectors of $(P-Q)^2$ corresponding 
to this eigenvalue, all the non-zero vectors in $\mathbf{span} \{x_{i}, x_{m+i} 
\}$ (including $Qx_i$) are also eigenvectors of the same eigenspace. 
Consequently $(P-Q)^2Qx_i = \lambda_i (2 - \lambda_i) Qx_i$, 
proving that 
\[ 
\Bigl\lVert (P-Q) \frac{Q x_i}{\lVert Q x_i \rVert} \Bigr\rVert^2 
= \lambda_i (2 - \lambda_i), 
\] 
and therefore that $\sup_{\theta \in \mathbb{S}_d} \lVert (P-Q) \theta \rVert$ 
is reached on $\mathbf{Im}(Q)$. Finally, if $q = 0$, then $P-Q$ is the null 
operator, so that $\sup_{\theta \in \mathbb{S}_d} \lVert (P-Q) \theta \rVert$
is reached everywhere, including on $\mathbf{Im} (Q) \cap \mathbb{S}_d$. 
\end{proof}

\end{document}